\crefname{figure}{Figure}{Figures}
\crefname{table}{Table}{Tables}
\def\@themcountersep{}
\definecolor{lred}{rgb}{1,0.8,0.5}
\definecolor{lblue}{rgb}{0.8,0.8,1}
\definecolor{dred}{rgb}{0.6,0,0}
\definecolor{dblue}{rgb}{0,0,0.7}
\definecolor{violet}{rgb}{0.5804,0.0000,0.8275}
\definecolor{purple}{rgb}{0.2400,0.5700,0.2500}
\definecolor{TGreen}{rgb}{0,0.50,0.10}
\newcommand*\linenomathpatch[1]{%
    \cspreto{#1}{\linenomath}%
    \cspreto{#1*}{\linenomath}%
    \csappto{end#1}{\endlinenomath}%
    \csappto{end#1*}{\endlinenomath}%
}
\newcommand*\linenomathpatchAMS[1]{%
    \cspreto{#1}{\linenomathAMS}%
    \cspreto{#1*}{\linenomathAMS}%
    \csappto{end#1}{\endlinenomath}%
    \csappto{end#1*}{\endlinenomath}%
}
\let\linenomathAMS\linenomathWithnumbers
\patchcmd\linenomathAMS{\advance\postdisplaypenalty\linenopenalty}{}{}{}
\let\linenomathAMS\linenomathNonumbers
\title{Exact Matrix Completion via  High-Rank Matrices \\ in Sum-of-Squares Relaxations  } 
\let\@fnsymbol\@arabic
\author{
\normalsize
    Godai Azuma\thanks{Department of  Mathematical and Computing Science,
        Tokyo Institute of Technology, 2-12-1-W8-29 Oh-Okayama, Meguro-ku, Tokyo 152-8552, Japan.
        ({\tt Makoto.Yamashita@c.titech.ac.jp}).
        The research of Makoto Yamashita was partially supported by JSPS KAKENHI Grant Number JP20H04145.}
        \textsuperscript{,}\thinspace \thanks{Department of Industrial and Systems Engineering,
        Aoyama Gakuin University, 5-10-1-O-410b Fuchinobe, Chuo-ku, Sagamihara-shi, Kanagawa 252-5258, Japan ({\tt azuma@ise.aoyama.ac.jp}).
        The research of Godai Azuma was supported by JSPS KAKENHI Grant Number JP22KJ1307.}
\and
\normalsize
	Sunyoung Kim\thanks{Department of Mathematics, Ewha W. University, 52 Ewhayeodae-gil, Sudaemoon-gu,
	Seoul 	03760, Korea  ({\tt skim@ewha.ac.kr}). This work was supported
        by  NRF 2021-R1A2C1003810.}
\and
\normalsize
        Makoto Yamashita\footnotemark[1]
        }
\begin{document}
\maketitle


\begin{abstract} \noindent
We study  exact matrix completion from partially available data with hidden connectivity patterns.
Exact matrix completion was shown to be possible recently by Cosse and Demanet in 2021
with Lasserre's relaxation using the trace of
the variable matrix as the objective function with given data structured in  a chain format.
In this study, we introduce  a structure for the objective function so that the resulting sum-of-squares (SOS) relaxation, 
the dual of Lasserre's SDP relaxation, produces a  rank-($N$-1) solution, where $N$ denotes the size of variable matrix 
in the  SOS relaxation.
Specifically,  the arrowhead structure is employed for the coefficient matrix of the objective function.
We show that a matrix can be exactly completed  through the SOS relaxation when
the connectivity of  given data   is not explicitly displayed or follows  a chain format.
The theoretical exactness is proved using the rank of the Gram matrix for the SOS  relaxation.
We also present numerical algorithms designed to find the coefficient matrix in the SOS relaxation.
Numerical experiments illustrate the validity of the proposed algorithm.

\end{abstract}

\vspace{0.5cm}

\noindent
{\bf Key words. } Matrix completion, exact sum of squares  of relaxations,  arrowhead pattern matrix, linear combinations of given data,
algorithms for matrix completion.

\vspace{0.5cm}

\noindent
{\bf AMS Classification. }
90C22,  	
90C25, 	
90C26.  	

\section{Introduction} \label{sec:introduction}

Matrix completion is to find or estimate the entries of a matrix from partially given entries \cite{Candes2009,Candes2010,Gross2011}.
It has been a  very widely studied subject in recent years for its wide-range applications such as
compressed sensing \cite{CAI2010,Candes2006stable,DONOHO2006}, multi-class learning \cite{Argyriou2008}, dimension reduction \cite{Weinberger2006}.
A basic assumption in the study of matrix completion is that the matrix is low-rank or approximately low-rank  \cite{Candes2015,Candes2009,Chen2018,Keshavan2010,Kiraly2015,Sun2016}.

Recently, it was shown in  \cite{Cosse2021} that the rank-1 matrix can be recovered by
 the Lasserre's semidefinite (SDP) 
 relaxation under some assumptions on the given data. 
The problem of finding the rank-1 matrix $X = \x \trans{\y} \in \Real^{n \times m}$ 
with   $\x \in \Real^n$ and $\y \in \Real^m$ for noiseless case is written as
\begin{equation} \label{eq:MC0}
	\begin{array}{rl}
        \find  & X \in \Real^{n \times m} \\
        \subto & \rank(X) = 1 \\
		       & X_{ij} = (X_0)_{ij} \ \ (i,j) \in \Lambda,
    \end{array}
\end{equation}
where  $ \Lambda$ is a subset of $ \{1,\dots,n\} \times \{1, \dots, m\}$ and  $(X_0)_{ij} $ of $X_0 \in \Real^{n \times m}$  for $(i,j) \in \Lambda$ are given entries.

The condition  for the exact recovery in their work \cite{Cosse2021} was presented with  the connected
bipartite graph $\GC$ associated with $X$ in \eqref{eq:MC0} 
(see Lemma~1 in  \cite{Cosse2021}).
By letting $X = \x\trans{\y}$,
they assumed that at least one connected path in $\Lambda$ for each variable in $\x$ and $\y$ from $x_1$ exist.
Here, $X_{ij} = x_i y_j$ for $(i,j) \in \Lambda$  means that $x_i \to y_j$ or $y_j \to x_i$. 
For instance, for $i_{L} \in \{2, \dots, n\}$, 
a chain for $x_{i_L}$ can be expressed as
\begin{equation*}
  x_1 \to y_{j_1} \to x_{i_1} \to y_{j_2} \to \dots \to 
  y_{j_{L-1}} \to x_{i_{L-1}} \to y_{j_{L}} \to x_{i_{L}}
\end{equation*}
with $\{(1,j_1), (i_1, j_1), (i_1, j_2), \dots, (i_{L-1}, j_{L-1}), 
(i_{L-1}, j_{L}), (i_{L}, j_{L})\} \subset \Lambda$.
For each variable $y_{j_{\bar{L}}} \in \{1, \dots, m\}$, we also have a chain 
that passes through or arrives at $y_{j_{\bar{L}}}$. 
By expressing the variables $\x$ and $\y$ as one vector
 $\z \coloneqq [ x_2, x_3, \ldots, x_n, y_1, y_2, \ldots, y_m]$,
 the chain can be written with $\z$ as
 \begin{align}
  z_0 \to z_{i_1} \to z_{i_2} \to z_{i_3} \to \dots \to z_{i_{L-1}} \to z_{i_L},
  \label{eq:Chain}
 \end{align}
 where $z_0 \coloneqq x_1 = 1$. The vector $\z$ includes the variables from $x_2 (=z_1)$,
 thus, the length of $\z$ 
(the number of variables in \eqref{eq:MC0})
is $s \coloneqq n+m-1$.

In their work \cite{Cosse2021},  the rank of the Gram matrix for the sum of squares (SOS) relaxation \cite{Parrilo2003}, 
the dual of  the Lasserre's SDP relaxation \cite{Lasserre2001}, was shown to be $N-1$ using the condition  \eqref{eq:Chain},
 where  $N$ is the size of the  matrix. As a result, the rank-1 solution was obtained by the Lasserre's SDP relaxation.
 We should mention that  their SDP relaxation for    \eqref{eq:MC0} 
  minimized the trace of the variable matrix, and
 each constraint $ X_{ij} = (X_0)_{ij} \ \ (i,j) \in \Lambda$ of \eqref{eq:MC0} involves only one element of $X$.
For instance,
we consider the constraints with $\x \in \Real^3, \y \in \Real^3$ and $X_{ij} = x_iy_j$: 
\begin{equation} \label{eq:Ex0}
	\begin{alignedat}{8}
		h_1(\z) &= y_1 - (X_0)_{11}    & &= 0, \  & h_2(\z) &= x_2y_1 - (X_0)_{21} & & = 0, \  & h_3(\z) &= x_2y_2 - (X_0)_{22} = 0, \\
		h_4(\z) &= x_3y_1 - (X_0)_{31} & &= 0, \  & h_5(\z) &= x_3y_3 - (X_0)_{33} & & = 0.
	\end{alignedat}
\end{equation}
The chain structure   corresponding to \eqref{eq:Ex0} can be represented as
\[
\begin{array}{ccccccc}
 x_1 &  \longrightarrow &  y_1 & \longrightarrow& x_2  & \longrightarrow & y_2 \\
 & & &  \searrow  & & & \\
  &  & &  & x_3 & \longrightarrow & y_3.\\
\end{array}  
\]
If data is given as a linear combination of $h_k(\z) = 0$ $(k=1,\ldots,5)$, for example,
\begin{alignat*}{2}
	\overline{h}_1(\z) & = 6y_1-8x_2y_1 + 8x_2y_2 + 9x_3y_1 - 10 x_3y_3 & & -\overline{b}_1 = 0, \\
	\overline{h}_2(\z) & = 5x_2y_1 + 9x_2y_2 & & - \overline{b}_2 =0,
\end{alignat*}
 for some $\overline{b}_1, \overline{b}_2 \in \Real$, 
then we need to find a chain that runs through  $\z$ from the given data to see if the problem  can be exactly solved.
The underlying connectivity 
 is called the hidden connectivity in this paper.  
Data with the hidden connvectivity may contain more or less information for a chain. For the above instance, 
 $\overline{h}_1(\z)$ and $\overline{h}_2(\z)$ may contain some or all  monomials 
 $ x_1y_1, x_1y_2, x_1 y_3, x_2y_1, $ $x_2y_2, x_2y_3, x_3y_1, x_3y_2, x_3y_3 $
 with $x_1 = 1$. In this case, a chain can be constructed with $x_1y_1,  x_2y_1, x_2y_2, x_3y_1, x_3y_3$.
 The method in \cite{Cosse2021} for constructing the SOS relaxation  separately dealt with each monomial depending on their
 degree since each constraint in  \eqref{eq:MC0} involves only one variable $X_{ij}$. 
Their method cannot deal with data expressed as a
linear combination of  $h_k(\z) = 0$.


In this paper, we first prove   exact matrix completion for the given data with a chain structure \eqref{eq:Chain} by an SOS relaxation,
 the  dual of the Lasserre's SDP relaxation.
Then, we  show that
the matrix completion problem with the constraints expressed as
 linear combinations of $X_{ij} - (X_0)_{ij} \ (i,j) \in \Lambda$ in \eqref{eq:MC0} can be exactly solved by the SOS relaxation.
 Our approach is  to solve a minimization problem by  employing the  objective function with the arrowhead sparsity pattern~\cite{OLeary1990}
 and converting the equality constraints to inequality constraints.
 We demonstrate that  the objective function  for the minimization problem 
 can be constructed such that  it includes the given data 
and leads to the exact optimal solution.
We also present an algorithm for constructing the objective function with the given data.

\subsection{Main contributions} 
Our main contributions are:
\begin{itemize}

\item While the problem \eqref{eq:MC0} has been studied as an optimization problem with various formulations, 
 there has been limited investigation into the sparsity pattern on the objective function, which may be crucial for  the exact recovery of the matrix.
The arrowhead sparsity structure on the objective function is employed  to achieve the exact recovery of the matrix.

\item For  data with hidden connectivity, 
we show that our proposed method can successfully complete the matrix. 
This is attained through converting  equality constraints into  polynomial inequality constraints  of degree up to 4, which effectively
 reduces the degree of the dual variables in the SOS relaxation.

\item   We also introduce a two-stage algorithm.
The initial stage involves the determination of the objective function, implicitly incorporating the data provided by the constraints, utilizing the arrowhead structured elementary matrix. Subsequently, in the second stage, we solve the SOS relaxation with the objective function  obtained in the first stage and obtain an exact solution.

\end{itemize}

\subsection{Outline of the paper}

Our paper is organized as follows: In Section 2, we introduce some notation and symbols used in this paper and describe the matrix completion problem with an example.
The problem formulation in \cite{Cosse2021} is also described with  its SOS relaxation.
In Section 3, we present our problem formulation using the method in \cite{Kojima05,Kojima03}. In Section 4,  we show theoretical results on the exact matrix recovery 
for data with a chain structure and hidden connectivity. In particular,
we construct the objective function with the  arrowhead sparsity structure  in Section 4.1.
The framework of the proposed algorithm is described in Section 5.1 and numerical results are presented in Section 5.2.
We conclude in Section 6.

\section{Preliminaries} \label{sec:preliminaries}

\subsection{Notation and symbols}

Let $\Natural^s$ and $\Real^s$ be the sets of the $s$-dimensional nonnegative integers and real numbers, respectively.
For  $\boldsymbol{\alpha} =(\alpha_1, \alpha_2, \ldots, \alpha_s) \in \Natural^s$,
a monomial of $\z$ is denoted as $\z^{\boldsymbol{\alpha}} := \prod_{i=1}^s z_i^{\alpha_i}$
and its degree 
$\sum_{i=1}^s \alpha_i$.
A  vector  of all monomials $\z^{\boldsymbol{\alpha}}$ of degree $d$ or less 
 is described as 
\begin{equation*}
    \u_d(\z) \coloneqq \left[\z^{\boldsymbol{\alpha}}\right]_{\sum_{i=1}^s \alpha_i \leq d}
    = \begin{bmatrix} 1 & z_1 & \cdots & z_{s} & z_1^2 & z_1z_2 & \dots & z_{s}^d \end{bmatrix} \in \Real^{s_d},
\end{equation*}
where $s_d ={s+d  \choose d}$. 
We frequently write  $\u_d(\z)$ as $\u_d$, 
in particular, 
\begin{align}
  \u_1 & = [1, \z] := [ 1, x_2, x_3, \dots, x_n, y_1, y_2, \dots, y_m] \in \Real^{s+1}, \nonumber \\
  \u_2 & = [1, x_2, \ldots, x_n,y_1,\ldots, y_m, x_2x_3, x_2x_4, \ldots, y_{m-1}y_m, y_m^2] \in \Real^N, \label{eq:u2}
\end{align} 
  where $N  = {n+m+1  \choose 2}$.

The set of all polynomials in variable vector $\z$ is denoted by $\Real[\z]$, and the set of all polynomial of
degree at most $d$ by $\Real_d[\z]$,
the set of all \textit{nonnegative} polynomials in $\Real[\z]$ by $\NC[\z]$.
A polynomial $p(\z) \in \NC[\z] $ is an SOS polynomial if and only if there exist $r$ polynomials $q_1(\z),\ldots, q_r(\z) \in \Real[\z]$ such that
$p(\z) = \sum_{j=1}^r q_j(\z)^2$.
The set of SOS polynomials of degree $2d$ in $\NC[\z]$ is denoted by $\Sigma_{2d}[\z]$
and its precise definition is given by
\[ \Sigma_{2d}[\z] := \left\{ \sum_{j=1}^r q_j(\z)^2 : r \geq 1, q_j(\z) \in \Real_d[\z]\right\}.
 \]
Obviously, $\Sigma_{2d}[\z] \subseteq \NC[\z] \subseteq \Real[\z]$ holds. 
 An SOS relaxation by SOS polynomials in  $\Sigma_{2d}[\z]$  is denoted by SOS$_{2d}$.
We also use $\Sigma[\z]$ defined by
\[ \Sigma[\z] := \left\{ \sum_{j=1}^r q_j(\z)^2 : r \geq 1, q_j(\z) \in \Real[\z] \right\}.  \]
\begin{lemma} \cite{Choi95} \label{lemmaSOS}
 A polynomial $p(\z) \in \Real_{2d}[\z]$ is an SOS polynomial in $\Sigma_{2d}[\z]$ if and only if there exists
 $W \in \SymMat_+^{s_d}$ such that
\[ p(\z) = \trans{\u_d(\z)} W \u_d(\z), \] 
\end{lemma}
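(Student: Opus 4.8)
The plan is to establish the two implications of the equivalence separately, in each case reducing the claim to elementary linear algebra on the fixed monomial vector $\u_d(\z)$, whose entries are by construction exactly the monomials $\z^{\boldsymbol{\alpha}}$ of degree at most $d$.

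For the direction ``$p$ admits a positive semidefinite Gram matrix, hence $p \in \Sigma_{2d}[\z]$'', I would start from $W \in \SymMat_+^{s_d}$ with $p(\z) = \trans{\u_d(\z)} W \u_d(\z)$ and take a spectral decomposition $W = \sum_{j=1}^{s_d} \lambda_j v_j \trans{v_j}$ with eigenvalues $\lambda_j \ge 0$ and orthonormal eigenvectors $v_j$. Substituting yields $p(\z) = \sum_{j=1}^{s_d} \lambda_j \bigl(\trans{v_j}\u_d(\z)\bigr)^2 = \sum_{j:\lambda_j>0} q_j(\z)^2$ with $q_j(\z) \coloneqq \sqrt{\lambda_j}\,\trans{v_j}\u_d(\z)$. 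The key observation is that each $q_j$ is a linear combination of the entries of $\u_d(\z)$, hence a polynomial of degree at most $d$, so $q_j \in \Real_d[\z]$ and $p \in \Sigma_{2d}[\z]$ by the very definition of that set.

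For the converse, I would take a representation $p(\z) = \sum_{j=1}^r q_j(\z)^2$ with $q_j \in \Real_d[\z]$. Since $\u_d(\z)$ lists every monomial of degree at most $d$, each $q_j$ can be written as $q_j(\z) = \trans{c_j}\u_d(\z)$ for a coefficient vector $c_j \in \Real^{s_d}$; then $q_j(\z)^2 = \trans{\u_d(\z)}\,(c_j\trans{c_j})\,\u_d(\z)$, and summing over $j$ gives $p(\z) = \trans{\u_d(\z)} W \u_d(\z)$ with $W \coloneqq \sum_{j=1}^r c_j\trans{c_j}$, which lies in $\SymMat_+^{s_d}$ as a sum of rank-one positive semidefinite matrices.

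The computation is essentially routine, and there is no serious obstacle; the only point deserving a word of care is that the map $W \mapsto \trans{\u_d(\z)} W \u_d(\z)$ is not injective, because the products $z_iz_j$ occurring in $\u_d(\z)\trans{\u_d(\z)}$ satisfy linear relations (for instance $z_1z_2$ arises from several off-diagonal positions), so many Gram matrices represent the same polynomial. This does not affect the argument, however, since the lemma only asserts the existence of one such $W$: the constructive choice $W=\sum_j c_j\trans{c_j}$ handles the converse, and in the forward direction a suitable $W$ is already given. I would simply record this remark and note that the statement is the standard Gram-matrix characterization of sums of squares.
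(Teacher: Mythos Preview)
Your argument is correct and is exactly the standard proof of the Gram-matrix characterization of sums of squares: spectral decomposition of a given $W\in\SymMat_+^{s_d}$ yields $p(\z)=\sum_j(\sqrt{\lambda_j}\,\trans{v_j}\u_d(\z))^2$ with each factor in $\Real_d[\z]$, and conversely any representation $p=\sum_j q_j^2$ with $q_j=\trans{c_j}\u_d(\z)$ gives $W=\sum_j c_j\trans{c_j}\succeq O$. Your remark on the non-injectivity of $W\mapsto \trans{\u_d(\z)}W\u_d(\z)$ is also to the point and does not interfere with the existence claim.

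As for comparison with the paper: the paper does not give a proof at all. The lemma is simply quoted from \cite{Choi95} and stated without argument, so there is no ``paper's own proof'' to compare against. Your write-up supplies precisely the routine justification one would expect for this classical fact.
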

\noindent where $ \SymMat^{s_d}$ and $ \SymMat^{s_d}_+$ denote the sets of symmetric and positive semidefinite matrices of size $s_d$, respectively.
Such a matrix $W$ is called the Gram matrix of $p(\z)$.

A positive semidefinite matrix $R$ is also denoted by $R \succeq O$ or $R \in \SymMat_+$. 
A polynomial matrix is a matrix whose elements are polynomials in $\Real[\z]$.
Let $\Real^{n \times m}[\z]$ denote the set of all $n \times m$ polynomial matrices.
For $M \in \SymMat^{s_d}$,
we use $M_{I}$ to denote the submatrix of $M$ constructed by
the rows and columns of $M$ indexed by $I \subset \{1, \ldots, s_d\}$.

The graph $\GC = (V, E)$ denotes an undirected graph with the vertex set $V$ and the edge set $E$.
In paticular, a bipartite graph is a graph $\GC = (V, E)$ in which
$V$ can be divided to two subsets $L, R \subseteq V$ such that
$L \cap R = \emptyset$ and every edge connects a vertex in $L$ and one in $R$.
It is also be expressed by $\GC = (L, R, E)$.

\subsection{The rank-1 matrix completion problem}
\label{sec:matrix-completion}

To describe the  rank-1 matrix completion problem 
with partial data, 
 an illustrative example is presented:
\begin{example} \label{ex:partial_matrix}
    \begin{equation} \label{eq:example_partial_matrix}
        X_\mathrm{part} =
        \begin{bmatrix}
            7 & 3 & ? \\
            -35 & ? & 10 \\
            ? & 9 & ?
        \end{bmatrix}
    \end{equation}
     The goal of finding the  rank-1 matrix completion here is
    to find  $\x\in \Real^n$ and $\y  \in \Real^m$ from \eqref{eq:example_partial_matrix}, where $X = \x\trans{\y} \in \Real^{n \times m}$ with
    $x_1 = 1$.
    
   If  given data has a unique solution,
    any elements in a partial matrix   $X_\mathrm{part}$ can be computed.
    In the above example, we notice that the second row  of  $X_\mathrm{part}$ is obtained by multiplying the first row by $-5$, thus, 
    we see that the $(2,2)$nd element of $X_\mathrm{part}$ must be $-15$. 
    
     We can formulate the matrix completion of \eqref{eq:example_partial_matrix}
     as: 
    \begin{align*}
    	\begin{array}{rl}
            \find & \z \coloneqq \trans{[x_2, \ldots, x_n, y_1, \ldots, y_m]} \\
            \subto & y_1 = 7, \; y_2 = 3, \; x_2 y_1 = -35, \; x_2 y_3 = 10, \; x_3 y_2 = 9.
        \end{array}
    \end{align*}
\end{example}

Let $\x_0 \in \Real^{n}$ and $\y_0 \in \Real^m$ denote unknown true values of the rank-1 decomposition $X_0 = \x_0 \trans{\y_0}$ 
of $X$, and $ \z_0 \coloneqq \trans{[ (x_0)_2, \dots, (x_0)_n, (y_0)_1, \dots, (y_0)_m]} \in \Real^{s}$. 
Obviously,
if $\z_0$ contains a zero element, then either one of the rows or columns consists entirely of zeros,
and some variables in $z_i$ may not be connected to $x_1$ 
through a chain in $\Lambda$. 
When chains are disconnected, we can decompose the problem into subproblems corresponding to the disconnected components.
We can then apply the  discussion in this paper to each of these components.
Thus, the following is assumed 
without loss of generality. 
\begin{assum} \label{asm:nozeros_in_z}
    There are no zeros in the true value vectors $\x_0$ and $\y_0$, i.e., $\z_0$.
\end{assum}

To express the matrix completion problem in general, we define a function $h_k(\z) \coloneqq x_{i} y_{j} - (\x_0)_i(\y_0)_j$
where $i \in \{2,\ldots,n\}, y \in \{ 1,\ldots, m\}, k = 1, \ldots, K$ and $K$ is the number of partially given data, 
i.e., the cardinality of $\Lambda$.
Then, the matrix completion problem can be written as
\begin{equation}
    \label{eq:matrix_completion_first} \tag{$MC$}
	\begin{array}{rl}
        \find & \z  \\ 
        \subto & h_k(\z) = 0 \quad (k = 1,\ldots,K).
    \end{array}
\end{equation}
The problem can also be  described as a minimization problem with some continuous objective function $h_0(\z)$, for instance,
 $h_0(\z) = \sum_{i=2}^n x_i^2 + \sum_{j=1}^m y_j ^2$:
\begin{equation}
    \label{eq:matrix_completion_1} \tag{$MC^1$}
	\begin{array}{rl}
        \min & h_0 (\z)  \\ 
        \subto & h_k(\z) = 0 \quad (k = 1,\ldots,K).
    \end{array}
\end{equation}
Let $F$  be the feasible region of  \eqref{eq:matrix_completion_1}:
\[ F = \{ \z \in \Real^{s} : h_k(\z) =0 \ (k=1,\ldots, K) \}. \] 
We assume that $F$ is nonempty and bounded. Then,  \eqref{eq:matrix_completion_1} has a finite optimal value $\zeta^*$ at an optimal solution $\z^* \in F$.

 It was shown in \cite{Cosse2021} that problem \eqref{eq:matrix_completion_first} can be solved exactly by the Lasserre's SDP relaxation.
 They proved in \cite[Proposition 3]{Cosse2021}  that for the unique recovery of $X_0$, it is sufficient,
 in addition to the injectivity of the constraints, to find an SOS polynomial $\sum_{j=1}^r q_j(\z)^2 \in \Sigma_4[\z]$ for some $r\geq 1$ with the rank of the associated Gram matrix $N-1$, polynomials  $\lambda_k(\z) \in \Real_{2d-d_k}[\z]$ with $d=2$,   
and  $\rho \in \Real $ such that
\[ q(\z) = \sum_{\scriptsize{\bold{\alpha}} \in \Natural^N_{2}}  \z^{2\scriptsize{\bold{\alpha}}} - \rho + \sum_{k=1}^K \lambda_k(\z) h_k(\z) \ \mbox{ with } q(\z_0) = 0, \] 
where  $\Natural^N_{2} := \left\{\bold{\alpha} \in \Natural^N \mid 
 1 \le \sum_{i=1}^N \alpha_i \leq 2 \right\}$.   
Since $\sum_{\scriptsize{\bold{\alpha}} \in \Natural^N_{d} } \z^{2\scriptsize{\bold{\alpha}}} = \trans{(\u_d)}  \u_d,$
the SOS$_{2d}$ relaxation  in \cite{Cosse2021}, with $d=2$, can be written as
\begin{align}
    \label{eq:cosse_sos_relaxation}
	\begin{array}{rl}
        \max\limits_{\rho, \boldsymbol{\lambda}} & \rho \\
        \subto
            & q(\z) \coloneqq \trans{(\u_d)}  \u_d - \rho + \sum\limits_{k = 1}^K h_k(\z) \lambda_k(\z) \in \Sigma_{2d}[\z]  \\
            & \lambda_k(\z) \in \Real_{2d - d_k}[\z] \quad (k = 1, \ldots, K).
    \end{array}
\end{align}

\section{Problem formulation} \label{sec:main}

We describe
our problem formulation for \eqref{eq:matrix_completion_first}  in this section. The primary goal of our formulation is to handle
the given data with hidden connectivity  for the exact matrix recovery and to efficiently compute $\lambda(\z)$.
We employ the objective function $Q \in \SymMat^N$ with the arrowhead structured  matrix~\cite{OLeary1990} so that
the rank of the Gram matrix for $q(\z)$ becomes $N-1$. 
Moreover, we replace $h_k(\z) = 0 \ (k = 1, \ldots, K)$ in \eqref{eq:matrix_completion_first} with $ h_k(\z)^2 \leq 0$. 

\subsection{Matrix completion problem with connectivity}

Since $h_k(\z) = 0$ is equivalent to 
an inequality $h_k(\z)^2 \leq 0$
for any $\z \in \Real^{s}$ and $k = 1,\dots, K$, 
the matrix completion \eqref{eq:matrix_completion_1} can be equivalently expressed as
\begin{align}
	 \label{eq:matrix_completion} \tag{$MC^2$}
	\begin{array}{rl}
        \min & \  \ h_0(\z)   \\ 
        \subto & -h_k(\z)^2 \geq 0 \quad (k = 1,\ldots,K).
    \end{array}
\end{align}
We define the Lagrangian function $\LC(\z,\bold{\lambda}) : \Real^{s} \times (\Sigma[\z])^K \rightarrow \Real$ for  \eqref{eq:matrix_completion}
\begin{align}
\LC(\z,\bold{\lambda}) = h_0(\z) + \sum_{k=1}^K h_k(\z)^2  \lambda_k(\z). 
\end{align}
Since $F$ corresponds to the feasible region in \eqref{eq:matrix_completion},
it holds that for any $\bold{\lambda} \in (\Sigma[\z])^K$,
\begin{equation*}
	\LC(\z,\bold{\lambda}) \leq h_0(\z) \ \mbox{ if } \z \in F. 
\end{equation*}
We define a Lagrangian dual of  \eqref{eq:matrix_completion}:
\begin{align} \label{eq:LagDual}
\max \; \LC^*(\bold{\lambda}) \quad \subto \;\; \bold{\lambda} \in (\Sigma[\z])^K,
\end{align}
where 
\[ \LC^*(\bold{\lambda}) = \inf \left\{ \LC(\z, \bold{\lambda}) : \z \in \Real^s \right\}. \]
Let $\eta $ be the maximum of the degree of $h_k(\z), \ k=0, 1, \ldots, K$ in  \eqref{eq:matrix_completion}, and
we take $d \in \Natural$ such that $d \ge \lceil \eta/2 \rceil$.
We also let  $2d_k$ be  the degree of  $h_k(\z)^2$. 
Then, we consider a subproblem of the Lagrangian dual \eqref{eq:LagDual}:
\begin{equation} \label{eq:LagDualSub}
	\max \; \LC^*(\boldsymbol{\lambda}) \quad \subto \;\; \lambda_k(\z) \in  \Sigma_{2d-2d_k} \ (k=1, \ldots, K). 
\end{equation}
We can rewrite the problem  \eqref{eq:LagDualSub} as
\begin{equation*}
	\begin{array}{rl}
        \max\limits_{\rho, \boldsymbol{\lambda}} & \rho \\
        \subto
        & \LC(\z,\bold{\lambda}) - \rho \geq 0 \mbox{ \ for every } \z \in \Real^{s}, \\
        & \lambda_k(\z) \in  \Sigma_{2d-2d_k} \quad (k=1,\ldots,K). \rule{0pt}{2.5ex}
    \end{array}
\end{equation*}
By replacing the inequality constraints $ \LC(\z,\bold{\lambda}) - \rho \geq 0 \ (\forall \z \in \Real^s)$ with an SOS condition
$ \LC(\z,\bold{\lambda}) - \rho \in \Sigma_{2d},$ we obtain an SOS relaxation of \eqref{eq:matrix_completion}:
\begin{equation*}
	\begin{array}{rl}
        \max\limits_{\rho, \boldsymbol{\lambda}} & \rho \\
        \subto
        & \LC(\z,\bold{\lambda}) - \rho \in \Sigma_{2d} \mbox{ \ for every } \z \in \Real^{s}, \\
        & \lambda_k(\z) \in  \Sigma_{2d-2d_k} \quad (k=1,\ldots,K). \rule{0pt}{2.5ex}
    \end{array}
\end{equation*}
For $Q \in \SymMat^N$, we let $h_0(\z) = \trans{(\u_2)} Q \u_2$ where $\u_2 $ is given by \eqref{eq:u2}. 
Then the resulting SOS relaxation of  \eqref{eq:matrix_completion} with 
$d=2$ is 
\begin{align}
    \label{eq:sos_relaxation}
	\begin{array}{rl}
        \max\limits_{\rho, \boldsymbol{\lambda}} & \rho \\[-1ex]
        \subto
        & q(\z) := \trans{(\u_2)} Q \u_2 - \rho
            + \sum\limits_{k = 1}^K h_k(\z)^2 \lambda_k(\z)  \in \Sigma_4[\z], \\
        & \lambda_k(\z) \in  \Sigma_{4 - 2d_k}[\z] \quad (k = 1, \ldots, K).
    \end{array}
\end{align}

Reducing  \eqref{eq:sos_relaxation} to a semidefinite program is based on Lemma \ref{lemmaSOS}.
By applying  Lemma \ref{lemmaSOS}
to $q(\z)$ and $h_k(\z)^2 \lambda_k(\z)$, we obtain a semidefinite program for \eqref{eq:sos_relaxation}.
See \cite{Kojima05} for details on the reduction.

We derive Lasserre's SDP relaxation \cite{Lasserre2001} of  \eqref{eq:matrix_completion}  based on the interpretation in \cite{Kojima03}.
Let  $\nu_k = d - d_k \ (k=1,\ldots,K)$.  
The following polynomial optimization problem on positive semidefinite cones (a nonlinear semidefinite program) is equivalent to \eqref{eq:matrix_completion}:
\begin{equation} \label{eq:general_sdp_relaxation}
	\begin{array}{rl}
        \min & \trans{\u_d(\z)} Q \u_d(\z) \\[1ex]
        \subto
        & -h_k(\z)^2 \u_{\nu_k}(\z) \trans{\u_{\nu_k}(\z)} \succeq O \ (k=1,\ldots,K), \\[1ex]
        & \u_d(\z) \trans{\u_d(\z)}  \succeq O.
    \end{array}
\end{equation}
By applying the linearization in \cite{Kojima03} to \eqref{eq:general_sdp_relaxation}, we obtain a linear semidefinite program, which is the dual of 
the semidefinite program obtained from  \eqref{eq:sos_relaxation}.
If $d$ is taken to be $2$, then \eqref{eq:general_sdp_relaxation} corresponds to the Lasserre's SDP relaxation  of \eqref{eq:matrix_completion}.
We call the linear SDP problems obtained through linearizing
\eqref{eq:general_sdp_relaxation}  and \eqref{eq:sos_relaxation} with $d=2$
\textit{the primal SDP relaxation} and \textit{the dual SDP relaxation} of 
\eqref{eq:matrix_completion}, respectively. 

In our formulation \eqref{eq:sos_relaxation}, the objective function $ \trans{\u_d(\z)} Q \u_d(\z)$
 is employed instead of $\trans{\u_d(\z)} \u_d(\z)= \trans{\u_d(\z)} I \u_d(\z)$ in \cite{Cosse2021}, 
 where $I$ denotes the identity matrix. 
The problem~\eqref{eq:sos_relaxation} can be considered as
a generalization of \eqref{eq:cosse_sos_relaxation} 
in the sense
that the coefficient matrix $Q$ of the objective function 
can be chosen to be any symmetric matrix.
We select the structure of 
$Q$ in a manner that ensures the existence of an exact solution. The construction of $Q$ with the arrowhead sparsity pattern
is discussed in \cref{lemma:decomposition_of_El} and \cref{thm:rank_recover}.

\subsection{ Matrix completion problem with hidden connectivity} \label{sec:HiddenConnectivity}

Let $\h(\z) = \trans{[h_1(\z), h_2(\z), \dots, h_K(\z)]}$ be a column vector of 
 the constraints $x_i y_j - (\x_0)_i (\y_0)_j =0$.
We consider a case where  
$h_k(\z)$  includes several terms such as $x_i y_j - (\x_0)_i (\y_0)_j$, $j=1,\ldots,\bar{j}$
for some $\bar{j} >1$, a case more general 
than the previous subsection.
In such a case, the given data may not display the chain format,
instead,
they appear as a  linearly combination of data $x_i y_j - (\x_0)_i (\y_0)_j$, $j=1,\ldots,\bar{j}$, as mentioned in Section~\ref{sec:introduction}. 

A linear combination of $\h(\z)$ can be expressed as 
$\overline{\h}(\z) = C \h(\z)$
with some matrix 
$C \in \Real^{\overline{K} \times K}$. 
We assume that $\trans{C}C$ is invertible for the connectivity of the data.
Then the given data of the problem with hidden connectivity can be represented by $ \overline{h}_{\overline{k}}(\z) = 0 \ (\overline{k}=1,\ldots,\overline{K})$.

As in \eqref{eq:matrix_completion}, we consider 
$ (\overline{h}_{\overline{k}}(\z))^2 \leq 0$ for $\overline{h}_{\overline{k}}(\z) = 0$.
The resulting problem is 
\begin{align}
    \label{eq:matrix_completionH} \tag{$MC^3$}
	\begin{array}{rl}
    \min & h_0(\z)  \\
        \subto &  -( \overline{h}_{\overline{k}}(\z) )^2 \geq 0 \quad (\overline{k} = 1,\ldots,\overline{K}),
    \end{array}
\end{align}
and an SOS relaxation of  \eqref{eq:matrix_completionH} is
\begin{align}
    \label{eq:sos_relaxationH}
	\begin{array}{rl}
        \max\limits_{\rho, \boldsymbol{\lambda}} & \rho \\
        \subto
        & \trans{(\u_2)} Q \u_2 - \rho
            + \sum\limits_{\overline{k} = 1}^{\overline{K}} (\overline{h}_{\overline{k}}(\z))^2 \overline{\lambda}_{\overline{k}}(\z)  \in \Sigma_4[\z], \\
        & \overline{\lambda}_{\overline{k}}(\z) \in \Sigma_{4 - 2d_{\overline{k}}}[\z] \quad (\overline{k} = 1, \ldots, \overline{K}).
    \end{array}
\end{align}

\section{Theoretical results} \label{sec:theory}

In this section,
we construct $Q$ in \eqref{eq:sos_relaxation} in 
\cref{lemma:decomposition_of_El} and \cref{thm:rank_recover} and
  provide  a theoretical proof for the exact recovery with the SOS relaxation~\eqref{eq:sos_relaxation}. 
  In \cref{sec:given-data}, we show that $Q$ constructed in \cref{sec:Qconst} implicitly includes the given data with $\h(\z)$.   
The problem with hidden connectivity is discussed in \cref{sec:hidden}.

\subsection{Exact recovery with  SOS relaxation~\texorpdfstring{\eqref{eq:sos_relaxation}}{}}
 \label{sec:Qconst}
 
We define
the  constraint  of  the  SOS relaxation~\eqref{eq:sos_relaxation}  as
\begin{equation} \label{eq:target_sos}
	q(\z; Q; \rho, \boldsymbol{\lambda}) \coloneqq
	\trans{(\u_2)} Q \u_2 - \rho + \sum\limits_{k = 1}^K h_k(\z)^2 \lambda_k(\z)
    \in \Sigma_4[\z].
\end{equation}
Since $q(\z; Q; \rho, \boldsymbol{\lambda}) \in \Sigma_4[\z]$, there exists a positive semidefinite matrix $\Gamma \in \SymMat^N_+$ such that
 $q(\z; Q; \rho, \boldsymbol{\lambda}) = \trans{(\u_2)} \Gamma (\u_2)$.
If the rank of $\Gamma$ is $N-1$, the positive semidefiniteness of $\Gamma$ gurantees that there exists 
a unique vector $\u_2$ such that $\trans{(\u_2)} \Gamma (\u_2) = 0$ 
and $(\u_2)_1 = 1$.
This implies that
there exists a unique point $\bar{\z} \in \Real^s$ such that $q(\bar{\z}; Q; \rho, \boldsymbol{\lambda} ) = 0$.
When searching for a point $\bar{\z}$ that leads  
$q(\bar{\z}; Q; \rho, \boldsymbol{\lambda}) = 0$,  a natural candidate is a vector consisting of the true data values, denoted by $\z_0$. Let $(\u_2)_0 := \u_2(\z_0)$. 
Since $\h(\z_0) = 0$, 
 $\sum_{k = 1}^K h_k(\z_0)^2 \lambda_k(\z_0) = 0$ in \eqref{eq:target_sos}. Thus, 
\begin{align*}
0 = q(\z_0; Q; \rho, \boldsymbol{\lambda}) = \trans{(\u_2)}_0 Q (\u_2)_0 - \rho = \trans{(\u_2)_0} \Gamma (\u_2)_0,
\end{align*}
which implies
\begin{align} \label{eq:rho}
  \trans{(\u_2)_0} Q (\u_2)_0 - \rho = 0.
 \end{align} 
Therefore, we can take  $\bar{\z} = \z_0$.
 
For the uniqueness of $\bar{\z}$, the rank of $\Gamma$ needs to be $N-1$. 
Thus, $Q$ must be chosen such that the rank of $Q$ is $N-1$. 
Recall that $Q \in \SymMat^N$ can be any symmetric matrix in \eqref{eq:sos_relaxation}. Specifically, 
we utilize the arrowhead sparsity pattern  for the rank of $Q$ to be $N-1$.

To construct  $Q$ whose rank is $N-1$ using \eqref{eq:rho},
we first  consider 
 $\trans{(\u_2)} Q \u_2 - \trans{(\u_2)}_0 Q (\u_2)_0$ in \eqref{eq:target_sos}.
Let $E^r \in \SymMat^{N}$ be a diagonal matrix whose $(r,r)$th element is one and all others are zeros.

\begin{lemma}
    \label{lemma:decomposition_of_El}
    For any $r \in \{2 \ldots, N\}$,
    there exists 
    $Q^r \in \SymMat_+^{N}$ such that
    \begin{equation} \label{eq:decomposition_of_El}
		\trans{(\u_2)} Q^r \u_2 - \trans{(\u_2)}_0 Q^r (\u_2)_0 = \trans{\left(\u_2 - (\u_2)_0\right)} E^r \left(\u_2 - (\u_2)_0\right).
    \end{equation}
\end{lemma}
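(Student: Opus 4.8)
The plan is to expand both sides of \eqref{eq:decomposition_of_El} and compare. On the right-hand side, since $E^r$ picks out the $r$th coordinate, $\trans{(\u_2 - (\u_2)_0)} E^r (\u_2 - (\u_2)_0) = \bigl((\u_2)_r - ((\u_2)_0)_r\bigr)^2 = (\u_2)_r^2 - 2((\u_2)_0)_r (\u_2)_r + ((\u_2)_0)_r^2$. On the left-hand side, $\trans{(\u_2)} Q^r \u_2 - \trans{(\u_2)_0} Q^r (\u_2)_0$ is an inhomogeneous quadratic form in the monomial vector $\u_2$ that vanishes at $\z = \z_0$ by construction. So the task is to exhibit a \emph{positive semidefinite} $Q^r$ whose associated quadratic form in $\u_2$, after subtracting its value at $\z_0$, equals the displayed right-hand side. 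Writing $c := ((\u_2)_0)_r \in \Real$ (a fixed real number once $\z_0$ is fixed), I want $\trans{(\u_2)} Q^r \u_2 = (\u_2)_r^2 - 2c\,(\u_2)_r + c^2 + \bigl(\trans{(\u_2)_0} Q^r (\u_2)_0 - c^2\bigr)$; the trailing constant must itself be reproduced by $\trans{(\u_2)} Q^r \u_2$ evaluated appropriately, so the natural closed form is
\begin{equation*}
  \trans{(\u_2)} Q^r \u_2 = \bigl((\u_2)_r - c\,(\u_2)_1\bigr)^2,
\end{equation*}
using that $(\u_2)_1 = 1$ identically (the first entry of the monomial vector is $1$), hence $(\u_2)_1 = ((\u_2)_0)_1 = 1$ as well. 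This is manifestly a single square, so its Gram matrix $Q^r$ is rank-one and positive semidefinite: explicitly $Q^r = \v \trans{\v}$ where $\v \in \Real^N$ has $\v_r = 1$, $\v_1 = -c$, and all other entries zero.

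The verification then splits into two routine checks. First, $Q^r = \v\trans{\v} \succeq O$ is immediate. Second, I evaluate $\trans{(\u_2)} Q^r \u_2 - \trans{(\u_2)_0} Q^r (\u_2)_0 = \bigl((\u_2)_r - c(\u_2)_1\bigr)^2 - \bigl(((\u_2)_0)_r - c((\u_2)_0)_1\bigr)^2$. Since $(\u_2)_1 \equiv 1$, the first term is $((\u_2)_r - c)^2$, and since $((\u_2)_0)_r = c$ and $((\u_2)_0)_1 = 1$, the second term is $(c - c)^2 = 0$. Thus the left-hand side equals $((\u_2)_r - c)^2 = \bigl((\u_2)_r - ((\u_2)_0)_r\bigr)^2$, which is exactly $\trans{(\u_2 - (\u_2)_0)} E^r (\u_2 - (\u_2)_0)$. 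This closes the proof.

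There is essentially no hard step here — the lemma is a bookkeeping identity. The only mild subtlety worth stating carefully is the use of the identity $(\u_2)_1 = 1$ (equivalently $z_0 = x_1 = 1$, per the paper's convention), which is what lets the constant term $c^2$ be absorbed into the square $\bigl((\u_2)_r - c(\u_2)_1\bigr)^2$ while keeping $Q^r$ a bona fide symmetric matrix acting on $\u_2$; without it one could not write the affine expression $(\u_2)_r - c$ as a homogeneous quadratic form in $\u_2$. I would also remark that the condition $r \ge 2$ is what guarantees $r \ne 1$, so that $\v$ genuinely has two distinct nonzero coordinate slots (or one, if $c = 0$) and the construction is consistent; the case $c=0$ just gives $Q^r = E^r$ directly. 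The same $Q^r$ will later be summed over $r$ (with weights) to assemble the arrowhead-patterned $Q$ of \cref{thm:rank_recover}, which explains why each $Q^r$ is taken rank-one with its off-diagonal mass confined to the first row/column — precisely the arrowhead shape.
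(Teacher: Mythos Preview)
Your proof is correct and follows essentially the same approach as the paper's: both construct the rank-one matrix $Q^r = \v\trans{\v}$ with $\v_1 = -((\u_2)_0)_r$, $\v_r = 1$, and zeros elsewhere, then verify that $\trans{(\u_2)_0} Q^r (\u_2)_0 = 0$ so the left-hand side reduces to $((\u_2)_r - ((\u_2)_0)_r)^2$. The paper phrases the construction by writing $(\u_2)_r = z_{i(r)} z_{j(r)}$ and spelling out the four nonzero entries of $Q^r$ in terms of $(\z_0)_{i(r)}(\z_0)_{j(r)}$, but this is your $c$ in different notation.
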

\begin{proof}
    We define two monomials $z_{i(r)}$ and $z_{j(r)}$ of degree $0$ or $1$ so that the $r$th component of 
    $\u_2$ corresponds to the product of $z_{i(r)}$ and $z_{j(r)}$, i.e., $(\u_2)_r = z_{i(r)} z_{j(r)}$.
    Let $Q^r \in \SymMat_+^{N}$ be a matrix with only four nonzero elements in 
	\begin{align} \label{eq:definition_of_Qell}
       [Q^r]_{11} = ((\z_0)_{i(r)})^2 ((\z_0)_{j(r)})^2, \quad  
      [Q^r]_{1r}  = [Q^r]_{r1} = - (\z_0)_{i(r)} (\z_0)_{j(r)},   \quad
      [Q^r]_{rr}   = 1.
  \end{align}
Then, we have
\begin{align}
    \trans{(\u_2)}_0 Q^r (\u_2)_0 & = 
				((\z_0)_{i(r)})^2 ((\z_0)_{j(r)})^2 - 2(\z_0)_{i(r)}(\z_0)_{j(r)} (\z_0)_{i(r)}(\z_0)_{j(r)} + ((\z_0)_{i(r)})^2 ((\z_0)_{j(r)})^2 \nonumber \\
			& = 0.
            \label{eq:u02Qru02}
\end{align}
	By \eqref{eq:definition_of_Qell} and \eqref{eq:u02Qru02},
	\begin{align*}
		 \trans{(\u_2)} Q^r \u_2 - \trans{(\u_2)}_0 Q^r (\u_2)_0
		= & \  z_{i(r)}^2 z_{j(r)}^2 - 2(\z_0)_{i(r)}(\z_0)_{j(r)} z_{i(r)}z_{j(r)} + ((\z_0)_{i(r)})^2 ((\z_0)_{j(r)})^2 \nonumber \\
		= & \ \trans{\left(\u_2 - (\u_2)_0\right)} E^{r} \left(\u_2 - (\u_2)_0\right).
	\end{align*} 
\end{proof}

Next, we show that for any ${\boldsymbol\lambda}$,
there exists a matrix $Q$ such that
the rank of  the Gram matrix
for $q(\z; Q; \rho, \boldsymbol{\lambda})$ is of $N-1$.
\begin{theorem}
    \label{thm:rank_recover}
	Suppose that Assumption~\ref{asm:nozeros_in_z} holds.
	Then, there exist	
	$Q \in \SymMat^N_+,  \overline{E}\in \SymMat^N_+, \Gamma \in \SymMat^N_+$
 such that
	\begin{equation} \label{eq:equality_of_Q_and_R}
		\trans{(\u_2)} Q \u_2 - \trans{(\u_2)}_0 Q (\u_2)_0 = \trans{\left(\u_2 - (\u_2)_0\right)} \overline{E} \left(\u_2 - (\u_2)_0\right)
		= \trans{(\u_2)} \Gamma  \u_2, 
	\end{equation}
	and the ranks of $Q, \overline{E}$ and $\Gamma$ are all $N-1$.
	As a result,  $\trans{(\u_2)} Q \u_2 - \trans{(\u_2)}_0 Q (\u_2)_0$ is an SOS polynomial.
     In particular,  $Q$ can be constructed with the arrowhead sparsity pattern.
      
\end{theorem}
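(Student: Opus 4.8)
The plan is to assemble $Q$ from the rank-one blocks produced by \cref{lemma:decomposition_of_El}. Set
\[
Q \coloneqq \sum_{r=2}^{N} Q^r, \qquad \overline{E} \coloneqq \sum_{r=2}^{N} E^r = \mathrm{diag}(0,1,\dots,1) \in \SymMat^N_+ ,
\]
and write $a_r \coloneqq (\z_0)_{i(r)}(\z_0)_{j(r)} = ((\u_2)_0)_r$ for $r \ge 2$. Summing the polynomial identity \eqref{eq:decomposition_of_El} over $r = 2,\dots,N$ gives at once $\trans{(\u_2)} Q \u_2 - \trans{(\u_2)_0} Q (\u_2)_0 = \trans{(\u_2 - (\u_2)_0)}\,\overline{E}\,(\u_2 - (\u_2)_0)$. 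Being a sum of positive semidefinite matrices, $Q \in \SymMat^N_+$, and inspecting \eqref{eq:definition_of_Qell} shows that the only nonzero entries of $Q$ are $[Q]_{11} = \sum_{r\ge 2} a_r^2$, $[Q]_{1r} = [Q]_{r1} = -a_r$ and $[Q]_{rr} = 1$ for $r \ge 2$; that is, $Q$ has the arrowhead sparsity pattern, which establishes the ``in particular'' clause.

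Next I would pin down $\Gamma$. By \eqref{eq:u02Qru02} each summand satisfies $\trans{(\u_2)_0} Q^r (\u_2)_0 = 0$, so $\rho \coloneqq \trans{(\u_2)_0} Q (\u_2)_0 = 0$ and the displayed equality reads $\trans{(\u_2)} Q \u_2 = \trans{(\u_2 - (\u_2)_0)}\,\overline{E}\,(\u_2 - (\u_2)_0)$. Expanding $\trans{(\u_2)} Q^r \u_2 = ((\u_2)_r - a_r (\u_2)_1)^2 = (\trans{v_r}\u_2)^2$ with $v_r \coloneqq e_r - a_r e_1$ (recall $(\u_2)_1 = 1$) identifies $Q^r = v_r \trans{v_r}$, hence $\trans{(\u_2)} Q \u_2 = \sum_{r=2}^{N}(\trans{v_r}\u_2)^2$ is visibly a sum of squares. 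I would therefore simply take $\Gamma \coloneqq Q = \sum_{r=2}^N v_r \trans{v_r}$: the full chain \eqref{eq:equality_of_Q_and_R} then holds, and the claim that $\trans{(\u_2)} Q \u_2 - \trans{(\u_2)_0} Q (\u_2)_0$ is an SOS polynomial follows.

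The one substantive point is that all three ranks equal $N-1$. For $\overline{E}$ this is immediate from its diagonal form. For $Q = \Gamma = \sum_{r=2}^N v_r\trans{v_r}$ I would use the standard identity $\ker\!\big(\sum_r v_r\trans{v_r}\big) = (\mathrm{span}\{v_r\})^{\perp}$, reducing the claim to the linear independence of $v_2,\dots,v_N$. That is clear: in $\sum_{r=2}^N c_r v_r = -\big(\textstyle\sum_r c_r a_r\big)e_1 + \sum_{r=2}^N c_r e_r$ the coefficient of $e_r$ is $c_r$ for each $r \ge 2$, forcing $c_r = 0$ for all $r$; this uses nothing about the values $a_r$, so Assumption~\ref{asm:nozeros_in_z} plays no role here beyond being a standing hypothesis. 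Thus $\mathrm{span}\{v_2,\dots,v_N\}$ has dimension $N-1$ and $\rank Q = \rank \Gamma = N - 1$. The only subtlety worth flagging — and the reason I route the last equality through $\Gamma \coloneqq Q$ rather than ``solving'' for $\Gamma$ — is that the map $M \mapsto \trans{(\u_2)} M \u_2$ is not injective, since a degree-four monomial has several representations as a product of two entries of $\u_2$; taking $\Gamma = Q$ avoids having to argue about which Gram matrix is meant.
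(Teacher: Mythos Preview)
Your proof is correct and follows essentially the same route as the paper: define $Q=\sum_{r=2}^N Q^r$, $\overline{E}=\sum_{r=2}^N E^r$, take $\Gamma=Q$ using \eqref{eq:u02Qru02}, and read off the ranks. The only cosmetic difference is that the paper bounds $\rank Q$ from above by subadditivity and from below by noting that the submatrix $Q_{\{2,\dots,N\}}$ is the identity, whereas you argue directly via linear independence of the vectors $v_r=e_r-a_r e_1$; both yield $\rank Q=N-1$ immediately.
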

\begin{proof}
    By \cref{lemma:decomposition_of_El}, there exists $Q^r \in \SymMat_+^n$ satisfying \eqref{eq:decomposition_of_El} for $r \in \{2, \dots, N\}$.
    We can obtain the first equality of \eqref{eq:equality_of_Q_and_R}
	by letting $Q = \sum_{r = 2}^{N} Q^r$ and $\overline{E} = \sum_{r = 2}^{N} E^r$.
	Obviously, $\rank \overline{E} = N - 1$.
	Next, we check the rank of the matrix $Q$.
    Since each $Q^r$ in \cref{lemma:decomposition_of_El} is a rank-1 matrix, the subadditivity of the rank implies $\rank Q \leq N - 1$.
	By \eqref{eq:decomposition_of_El},
	the submatrix $Q_{\{2, \ldots, N\}}$ is a diagonal matrix with ones on the diagonal. 
	Hence, it follows $\rank Q = N - 1$.
    Furthermore, we can take $\Gamma = Q$ by \eqref{eq:u02Qru02}, 
    and this implies that $\rank\Gamma = N-1$.
\end{proof}
It was shown in \cite{Azuma2021} that the exact solution of quadratic programs with quadratic constraints (QCQPs) can be obtained when the
aggregated sparsity pattern of the QCQP forms a tree. We mention that the arrowhead sparsity pattern forms a tree.

The following remark 
is a consequence of \cref{thm:rank_recover}
for $\rho = \trans{(\u_2)}_0 Q (\u_2)_0$ and $\boldsymbol{\lambda} = \0$.
\begin{rema} \label{rem:existence_Q_for_exact} 
    Assume that
   $\sum_{k=1}^K h_k(\z)^2 \lambda_k(\z)$ 
   with $\boldsymbol{\lambda} = \0$
    are added to the left hand side of \eqref{eq:equality_of_Q_and_R}. 
    Then, we have
    \begin{equation} \label{eq:equality_of_Q_and_R1}
		\trans{(\u_2)} Q \u_2 - \trans{(\u_2)}_0 Q (\u_2)_0 + \sum\limits_{k=1}^K h_k(\z)^2 \lambda_k(\z) = \trans{\left(\u_2 - (\u_2)_0\right)} \overline{E} \left(\u_2 - (\u_2)_0\right).
	\end{equation}	
    Then, it can be viewed that the results in \cref{thm:rank_recover} holds 
    with the choice of
     $\boldsymbol{\lambda} = \0$ in \eqref{eq:equality_of_Q_and_R1}.
    In other words, 
    when considering \eqref{eq:matrix_completion} for any rank-$1$ matrix completion problem,
    there always exists  $Q \in \SymMat_+^{N}$ of rank exactly $N - 1$ guaranteeing
    the existence of $\rho \in \bR$ and $\lambda_k(\z) \in \Sigma_{4 - 2d_k}[\z]$ such that
    $q(\z; Q; \rho, \boldsymbol{\lambda} )  \in \Sigma_4[\z]$.
     Moreover, $q(\z; Q; \rho, \boldsymbol{\lambda})$ can be represented with a rank-($N - 1$) Gram matrix.
\end{rema}

For nonzero $\lambda_k(\z)$, we show that
$h_k(\z)^2 \lambda_k(\z)$ does not decrease
the rank of the Gram matrix of \eqref{eq:target_sos}.
\begin{theorem} \label{thm:rank_recover_gen}
	Let $Q^r$ be an $N \times N$ symmetric positive semidefinite matrix defined by \eqref{eq:definition_of_Qell},
	and $Q \coloneqq \sum_{r = 2}^N Q^r$.
	For any $\rho \in \Real$ and $\lambda_k(\z) \in \Sigma_{4-2d_k}$ ($k = 1,\ldots,K$)
	such that $q(\z; Q; \rho, \boldsymbol{\lambda})$ is an SOS polynomial,
	the rank of the Gram matrix $\Gamma \in \SymMat^N_+$ for $q(\z; Q; \rho, \boldsymbol{\lambda})$,
    i.e., $\trans{(\u)_2} \Gamma \u_2 = q(\z; Q; \rho, \boldsymbol{\lambda})$,
	is  greater than or equal to $(N - 1)$.
\end{theorem}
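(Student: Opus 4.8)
The plan is to show that the Gram matrix $\Gamma$ of $q(\z;Q;\rho,\boldsymbol\lambda)$ cannot have rank $\le N-2$ by exhibiting $N-1$ linearly independent vectors in the column space of $\Gamma$ (equivalently, by ruling out a $2$-dimensional kernel). The key observation is that $\Gamma$ differs from $Q$ only by the Gram matrix contribution of $\sum_{k}h_k(\z)^2\lambda_k(\z)$, and that this contribution, being a sum of squares, is itself positive semidefinite. So first I would write $\Gamma = Q + G$, where $G \in \SymMat^N_+$ is (a) Gram matrix of the SOS polynomial $\sum_k h_k(\z)^2\lambda_k(\z) + (\text{constant adjustments absorbed into }\rho)$; more precisely, since $\trans{(\u_2)}\Gamma\u_2 = \trans{(\u_2)}Q\u_2 - \rho + \sum_k h_k(\z)^2\lambda_k(\z)$ and the constant $\rho$ only affects the $(1,1)$ entry, I would track the decomposition $\Gamma = Q - \rho E^1 + G_0$ where $G_0 \succeq O$ is any Gram matrix of the SOS polynomial $\sum_k h_k(\z)^2\lambda_k(\z)$ and $E^1$ is the $(1,1)$ elementary matrix. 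The structural fact I want is that $\rho$ is forced: evaluating at $\z_0$ gives $\trans{(\u_2)_0}\Gamma(\u_2)_0 = 0$ together with $\Gamma \succeq O$, hence $\Gamma(\u_2)_0 = 0$, so $(\u_2)_0 \in \ker\Gamma$ and this accounts for the one guaranteed direction of rank deficiency.

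Next I would argue that $\ker\Gamma$ is exactly one-dimensional. Suppose $v \in \ker\Gamma$. From $\Gamma = Q - \rho E^1 + G_0$ with $Q, G_0 \succeq O$ and $\Gamma \succeq O$, the identity $0 = \trans v \Gamma v = \trans v (Q - \rho E^1 + G_0) v$ does not immediately split because $-\rho E^1$ need not be PSD; so instead I would use that $(\u_2)_0$ itself lies in all three kernels: $Q(\u_2)_0 = 0$ by \eqref{eq:u02Qru02} (each $Q^r(\u_2)_0 = 0$), $G_0(\u_2)_0 = 0$ because $\sum_k h_k(\z_0)^2\lambda_k(\z_0)=0$ and $G_0 \succeq O$ forces the evaluation vector into its kernel, and hence $\rho E^1 (\u_2)_0 = 0$, i.e. $\rho = 0$ unless... — here one must be slightly careful, since $((\u_2)_0)_1 = 1 \ne 0$, this would force $\rho = 0$, which is false in general. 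The resolution is that $\rho E^1(\u_2)_0 \ne 0$ but $Q(\u_2)_0$ and $G_0(\u_2)_0$ are not separately zero either; rather $(\trans{(\u_2)_0}Q(\u_2)_0) - \rho = 0$ with the quadratic form vanishing only after subtracting $\rho$. So the clean route is: from $\Gamma \succeq O$ and $\trans{(\u_2)_0}\Gamma(\u_2)_0 = 0$ conclude $\Gamma(\u_2)_0 = 0$ directly, giving one kernel vector, and separately bound $\dim\ker\Gamma \le 1$ by a rank-lower-bound on the non-$(1,1)$ block.

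For the rank lower bound, the main idea (mirroring the proof of \cref{thm:rank_recover}) is to examine the submatrix $\Gamma_{\{2,\dots,N\}}$. Write $\Gamma_{\{2,\dots,N\}} = Q_{\{2,\dots,N\}} + (G_0)_{\{2,\dots,N\}}$; the $\rho E^1$ term contributes nothing to this submatrix. By \cref{lemma:decomposition_of_El} and the construction $Q = \sum_{r=2}^N Q^r$, we have $Q_{\{2,\dots,N\}} = I_{N-1}$, the identity. Since $(G_0)_{\{2,\dots,N\}} \succeq O$, it follows that $\Gamma_{\{2,\dots,N\}} = I_{N-1} + (G_0)_{\{2,\dots,N\}} \succeq I_{N-1} \succ O$, so $\rank\Gamma_{\{2,\dots,N\}} = N-1$. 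A submatrix has rank at most the rank of the whole matrix, hence $\rank\Gamma \ge N-1$, which is the claim. The main obstacle I anticipate is the first step — verifying carefully that the Gram matrix contribution of $\sum_k h_k(\z)^2\lambda_k(\z)$ can be taken PSD and, crucially, that its $\{2,\dots,N\}$-submatrix is PSD (this is automatic for a principal submatrix of a PSD matrix, so it is really just bookkeeping), together with confirming that the $\rho$ term lives only in the $(1,1)$ position of any Gram matrix of the constant $-\rho$. Once those are in place, the monotonicity of the PSD order on the principal submatrix finishes the argument cleanly, and no direction of $Q$'s rank deficiency can be "cancelled" by adding the PSD perturbation $G_0$ in the relevant block.
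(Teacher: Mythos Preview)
Your final argument is correct and is essentially the paper's proof: decompose the Gram matrix as $Q - \rho E^1 + G_0$ with $G_0 \succeq O$ a Gram matrix of $\sum_k h_k(\z)^2\lambda_k(\z)$, observe that $Q_{\{2,\dots,N\}}$ is the identity (positive definite), that $(G_0)_{\{2,\dots,N\}} \succeq O$ as a principal submatrix, and that the $-\rho E^1$ term does not touch this block; hence the $\{2,\dots,N\}$ principal submatrix is positive definite and the rank is at least $N-1$. The paper argues exactly this, with your $G_0$ called $\Delta$.

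Two minor remarks. First, the opening paragraphs about showing $(\u_2)_0 \in \ker\Gamma$ and then bounding $\dim\ker\Gamma \le 1$ are a detour you correctly abandon; they are not needed, and indeed the kernel computation cannot be carried out via the naive splitting you first attempt (as you note, $\rho E^1(\u_2)_0 \ne 0$). Second, both your proof and the paper's implicitly \emph{construct} a particular Gram matrix $\Gamma = Q - \rho E^1 + G_0$ rather than treating an arbitrary PSD $\Gamma$ satisfying $\trans{(\u_2)}\Gamma\u_2 = q(\z;Q;\rho,\boldsymbol\lambda)$; since Gram representations are not unique, the decomposition $\Gamma_{\{2,\dots,N\}} = Q_{\{2,\dots,N\}} + (G_0)_{\{2,\dots,N\}}$ is a choice, not a deduction about a given $\Gamma$. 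This matches the paper's reading of the statement, so it is not a defect of your proposal relative to the original.
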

\begin{proof}
	In the submatrix $Q_{\{2,\ldots,N\}}$ that includes the second  to the $N$th rows and columns of $Q$,
	the diagonal elements are positive numbers and all the other elements are zeros by \eqref{eq:definition_of_Qell}.
	Hence, the submatrix $Q_{\{2,\ldots,N\}}$ is a positive definite matrix.
	For any $k = 1,\ldots,K$,
	the polynomial $ h_k^2(\z) \lambda_k(\z) \in \Sigma(\z)$ from $\lambda_k(\z) \in \Sigma_{4-2d_k}(\z)$ and $h_k^2(\z) \in \Sigma_{2d_k}(\z)$,
	and there exists an $N \times N$ symmetric matrix $\Delta \in \SymMat_+^N$ such that
	\begin{equation*}
		\trans{(\u_2)} \Delta \u_2 = \sum_{k = 1}^K h_k^2(\z)  \lambda_k(\z).
	\end{equation*}
	Obviously, $\Delta_{\{2,\ldots,N\}} \succeq O$.
    For  $ p \in \Natural$,
    let $A \in \SymMat^p$ be a positive definite matrix, and let $B$ be a positive semidefinite matrix.
    It is easy to show that $A + B$ is also a positive definite matrix, i.e., the rank of $A + B$ is $p$.
	Thus, $Q_{\{2,\ldots,N\}} + \Delta_{\{2,\ldots,N\}}$ is a positive definite matrix.
	The rank of $Q - \rho E^1 + \Delta$ is greater than or equal to $N-1$ and
	$Q - \rho E^1 + \Delta$ is a Gram matrix for  $q(\z; Q; \rho, \boldsymbol{\lambda})$.
	\end{proof}

\begin{rema}\label{rema:rank-one}
    The rank-$(N-1)$ Gram matrix is essential for the 
    exactness of the primal SDP relaxation  which can be obtained by linearizing  \eqref{eq:general_sdp_relaxation}.

Since the feasible region $F$ of \eqref{eq:matrix_completion}
is nonempty and bounded as assumed in \cref{sec:matrix-completion},
its primal SDP relaxation is feasible. In fact,
$\z \in F$  satisfies $h_k(\z)^2 = 0$, thus, 
$-h_k(\z)^2 \u_{\nu_k}(\z) \trans{\u_{\nu_k}(\z)}$ is the zero matrix.

On the other hand, in \eqref{eq:sos_relaxation}, we can take $\lambda_k(\z) = 1$
for each $k=1, \dots, K$ and a  negative $\rho$ with very large magnitude. 
Then, we can find a positive definite matrix $\overline{W}$ satisfying
$\trans{(\u_2)} Q \u_2 - \rho + \sum\limits_{k = 1}^K h_k(\z)^2 \lambda_k(\z) = \trans{\u_d(\z)} \overline{W} \u_d(\z)$.
Thus, the dual SDP relaxation has an interior feasible point,
and strong duality holds for the primal and dual SDP relaxation.
Therefore, by the discussion in Section 3 of \cite{Azuma2021}, 
when the Gram matrix is of rank-$(N-1)$,
the primal SDP relaxation provides an exact solution to \eqref{eq:matrix_completion_first}.    
More precisely, let $X^*$ and $S(\z^*)$ be the solutions of the primal and dual SDP relaxation. Then,
    $X^*S(\z^*) = O$ by  strong duality.
    From the Sylvester's rank inequality~\cite{Anton2014},
        $\rank{X^*} + \rank{S(\z^*)} \leq N + \rank{X^*S(\z^*)}$ holds for $X^*$ and $S(\z^*)$.
    Therefore, $\rank{X^*} \leq N + \rank{O} - \rank{S(\z^*)} \leq N + 0 - (N - 1) = 1$.
    Consequently, this rank-1 condition ensures the exactness of SDP relaxation \cite{Azuma2021}.
\end{rema}

\subsection{Given partial data and the SOS expression} 
\label{sec:given-data}
\label{sec:expression-with-h}

While the SOS relaxation \eqref{eq:target_sos} includes
$\sum\limits_{k = 1}^K h_k(\z)^2 \lambda_k(\z)$,
\eqref{eq:equality_of_Q_and_R} holds without explicit dependence on $\h(\z)$.
We  show that  $\h(\z)$ is implicitly included in $\trans{\left(\u_2 - (\u_2)_0\right)} \overline{E} \left(\u_2 - (\u_2)_0\right)$
 in \eqref{eq:equality_of_Q_and_R} in this subsection.  

 Suppose that we have an arbitrary chain from the chains expressed as \eqref{eq:Chain}.
 Then,  the indices for the elements of $\h(\z)$ may not coincide with the data indices, for instance,  $h_2(\z) = z_7 z_3 - (\z_0)_7 (\z_0)_3$.  
 In this case,  $\h(\z)$ can be renumbered according to a chain structure
 as follows:
 \begin{align}
  h_{i_1}(\z) = z_{i_1} - (\z_0)_{i_1}, \quad h_{i_{\ell}}(\z) = 
  z_{i_{\ell-1}}  z_{i_{\ell}} - (\z_0)_{i_{\ell-1}}  (\z_0)_{i_{\ell}}, 
 \  \ell = 2, \dots, L. \label{eq:chain-constraints}
 \end{align}
 By the definition of $\u_2$ in \eqref{eq:u2},
we see that $\trans{\left(\u_2 - (\u_2)_0\right)} \overline{E} \left(\u_2 - (\u_2)_0\right)$ 
includes
 the terms of form 
 $ (z_i -(\z_0)_i  )^2 \ (i=1,\ldots,s) $ or $ (z_i z_j-(\z_0)_i (\z_0)_j)^2$ \ $(1 \leq i \le j \leq s)$.
 
First, let us focus on the chain in \eqref{eq:chain-constraints}
and express 
$ (z_{i_\ell} -(\z_0)_{i_\ell} )^2 \ (\ell=1,\ldots,L)$
as $\trans{(\h(\z) \otimes \u_2)} U_{i_{\ell}}  (\h(\z) \otimes \u_2)$
where $\otimes$ is the Kronecker product and 
$U_{i_\ell} \in \SymMat_+^{KN}$.
We see that $h_k(\z) z_i$ is included in the vectors $\h(\z) \otimes \u_1$ and   $\h(\z) \otimes \u_2$, and 
$h_k(\z) z_i z_j$ in   $\h(\z) \otimes \u_2$. 
To simplify the discussion with  $U_{i_\ell}$, 
we use the  notation   $I^1(h_k(\z) z_i)$ and $I^2(h_k(\z) z_i z_j)$ to denote 
the indices that correspond to 
$h_k(\z) z_i$ in  $\h(\z) \otimes \u_1$ and 
$h_k(\z) z_i z_j$ in $\h(\z) \otimes \u_2$,
i.e., 
$(\h(\z) \otimes \u_1)_{I^1(h_k(\z) z_i)} = h_k(\z) z_i$
and $(\h(\z) \otimes \u_2)_{I^2(h_k(\z) z_i z_j)} = h_k(\z) z_i z_j$.
We also use the same notation for $z_i = 1$ or $z_j = 1$.
For example, the first element of $\u_2$ is 1, thus,
$I^2(h_k(\z)) = (k-1)N+1$ and $(\h(\z) \otimes \u_2)_{I^2(h_k(\z))} = h_k(\z)$.
In addition, we also use $\e_i$ to denote a vector whose $i$th component is 1 and all other components are 0.

Next, we examine each term corresponding to a constraint which forms the chain in \eqref{eq:chain-constraints}.
The  term  $(z_{i_1} -(\z_0)_{i_1} )^2$ in 
$\trans{\left(\u_2 - (\u_2)_0\right)} \overline{E} \left(\u_2 - (\u_2)_0\right)$ 
corresponds to the first constraint in the chain
 and it  can be written as
\begin{align}
  z_{i_1} - (\z_0)_{i_1} = h_{i_1}(\z) = \trans{(\e_{I^1(h_{i_1}(\z))})}(\h(\z) \otimes \u_1), \label{eq:a_1}
\end{align}
from \eqref{eq:chain-constraints}.
Therefore, 
\begin{equation*}
  (z_{i_1} -(\z_0)_{i_1} )^2 = \trans{(\h(\z) \otimes \u_1)} U^1_{i_1}  (\h(\z) \otimes \u_1)  
\end{equation*}
holds with $U^1_{i_1} := E^{I^1(h_{i_1}(\z))} \in \SymMat_+^{s+1}$.

For the second term $(z_{i_2} -(\z_0)_{i_2})^2$, we use a relation:
\begin{align}
  z_{i_2} - (\z_0)_{i_2}  
  = & \ \frac{-1}{(\z_0)_{i_1}} (z_{i_1} - (\z_0)_{i_1}) z_{i_2} 
  + \frac{1}{(\z_0)_{i_1}}(z_{i_1} z_{i_2} -  (\z_0)_{i_1} (\z_0)_{i_2}) \nonumber \\
= & \ \frac{-1}{(\z_0)_{i_1}} h_{i_1}(\z) z_{i_2} 
+ \frac{1}{(\z_0)_{i_1}} h_{i_2}(\z) \nonumber \\
= & \trans{\left(\frac{-1}{(\z_0)_{i_1}} \e_{I^1(h_{i_1}(\z) z_{i_2})}
+ \frac{1}{(\z_0)_{i_1}} \e_{I^1(h_{i_2}(\z))} \right)} (\h(\z) \otimes \u_1). 
\label{eq:a_2}  
\end{align}
Therefore, 
\begin{equation*}
  (z_{i_2} -(\z_0)_{i_2} )^2
  =  \trans{(\h(\z) \otimes \u_1)} U^1_{i_2}  (\h(\z) \otimes \u_1)
\end{equation*}
can be obtained with 
\begin{equation*}
  U^1_{i_2} := 
\left(\frac{-1}{(\z_0)_{i_1}} \e_{I^1(h_{i_1}(\z) z_{i_2})}
+ \frac{1}{(\z_0)_{i_1}} \e_{I^1(h_{i_2}(\z))} \right)
\trans{\left(\frac{-1}{(\z_0)_{i_1}} \e_{I^1(h_{i_1}(\z) z_{i_2})}
+ \frac{1}{(\z_0)_{i_1}} \e_{I^1(h_{i_2}(\z))} \right)}.
\end{equation*}

We apply the induction step on $\ell=3,\dots,L$, i.e.,  $(z_{i_{\ell}} - (\z_0)_{i_{\ell}})^2$
is examined.
Assume that there exists
a vector $\a_{i_{\overline{\ell}}} \in \Real^{K(s+1)}$ for each $\overline{\ell} = 1, \dots, \ell-1$ 
 such that 
$z_{i_{\overline{\ell}}} - (z_{i_{\overline{\ell}}}) = \trans{\a_{i_{\overline{\ell}}}} (\h(\z) \otimes \u_1)$.
In fact, \eqref{eq:a_1} and \eqref{eq:a_2}
indicate that we can take  $\a_{i_1} = \e_{I^1(h_{i_1}(\z))}$
and $\a_{i_2} = 
\frac{-1}{(\z_0)_{i_1}} \e_{I^1(h_{i_1}(\z) z_{i_2})}
+ \frac{1}{(\z_0)_{i_1}} \e_{I^1(h_{i_2}(\z))}$, respectively.
As a result, we have 
\begin{align*}
   z_{i_{\ell}} - (\z_0)_{i_\ell}   =  &  \  
   \frac{1}{(\z_0)_{i_{\ell-2}} (\z_0)_{i_{\ell-1}}}
    (z_{i_{\ell-1}} z_{i_{\ell}}  - (\z_0)_{i_{\ell-1}} (\z_0)_{i_{\ell}} ) 
    z_{i_{\ell-2}} \nonumber \\
    & \ - 
    \frac{1}{(\z_0)_{i_{\ell-2}} (\z_0)_{i_{\ell-1}}}
    (z_{i_{\ell-2}} z_{i_{\ell-1}} - (\z_0)_{i_{\ell-2}} (\z_0)_{i_{\ell-1}} ) z_{i_{\ell}} \nonumber \\
    &  \ +
    \frac{(\z_0)_{i_{\ell}} (\z_0)_{i_{\ell-1}}}{(\z_0)_{i_{\ell-2}} (\z_0)_{i_{\ell-1}}}
     (z_{i_{\ell-2}} - (\z_0)_{i_{\ell-2}} ) \nonumber \\
    =  &  \ 
    \trans{\a_{i_\ell}} (\h(\z) \otimes \u_1),
  \end{align*}
  where  $\a_{i_{\ell}}$ is defined by
  \begin{align}
    \a_{i_{\ell}} = \frac{1}{(\z_0)_{i_{\ell-2}} (\z_0)_{i_{\ell-1}}} 
    \e_{I^1(h_{i_\ell}(\z) z_{i_{\ell-2}})} 
    - 
    \frac{1}{(\z_0)_{i_{\ell-2}} (\z_0)_{i_{\ell-1}}}
    \e_{I^1(h_{i_{\ell-1}}(\z) z_{i_{\ell}})} 
    +
    \frac{(\z_0)_{i_{\ell}} (\z_0)_{i_{\ell-1}}}{(\z_0)_{i_{\ell-2}} (\z_0)_{i_{\ell-1}}} \a_{i_{\ell-2}}.
    \label{eq:a_l}
  \end{align}
By letting $U^1_{i_{\ell}} = \a_{i_\ell} \trans{\a_{i_{\ell}}} \in \SymMat_+^{K(s+1)}$,
we have that
\begin{align} \label{eq:U1}
  (z_{i_{\ell}} -(\z_0)_{i_\ell} )^2
  =  \trans{(\h(\z) \otimes \u_1)} U^1_{i_\ell}  (\h(\z) \otimes \u_1)
\end{align}
for $\ell = 3, \dots, L$.
Since the elements of $\u_2$ 
include those of $\u_1$,
$U^1_{i_{\ell}} \in \SymMat_+^{K(s+1)}$ can be expanded into a matrix $U^2_{i_{\ell}} \in \SymMat_+^{KN}$ such that 
$\trans{(\h(\z) \otimes \u_1)} U^1_{i_\ell}  (\h(\z) \otimes \u_1)
= \trans{(\h(\z) \otimes \u_2)} U^2_{i_\ell}  (\h(\z) \otimes \u_2)$
by inserting zeros in $U^2_{i_{\ell}}$ appropriately.

By applying the discussion above to each chain
starting from $z_0 = x_1$,
we can find $U_i^2 \in \SymMat_+^{KN}$ for each $i=1,\ldots,s$ such that 
\begin{align*}
  (z_{i} -(\z_0)_{i} )^2
  =  \trans{(\h(\z) \otimes \u_2)} U_{i}^2  (\h(\z) \otimes \u_2).
\end{align*}

Now, we consider $ (z_i z_j-(\z_0)_i (\z_0)_j)^2$ \ $(1 \leq i < j \leq s)$ since
 $\trans{\left(\u_2 - (\u_2)_0\right)} \overline{E} \left(\u_2 - (\u_2)_0\right)$  includes not only   $  (z_{i} -(\z_0)_{i} )^2$ but also
$ (z_i z_j-(\z_0)_i (\z_0)_j)^2$ \ $(1 \leq i < j \leq s)$.
By the relation \eqref{eq:a_l}, we can find $\hat{\a}_{i}$ and $\hat{\a}_{j}$
such that $(z_i - (\z_0)_i) = \trans{(\hat{\a}_{i})} (\h(\z) \otimes \u_1)$
and $(z_j - (\z_0)_j) = \trans{(\hat{\a}_{j})} (\h(\z) \otimes \u_1)$.
We also notice that $\u_1$ includes $z_j$. 
As a result, we write as follows: 
\begin{align} \label{eq:factor}
  z_i z_j  - (\z_0)_i (\z_0)_{j} 
 = & \  (z_i - (\z_0)_i) z_j 
  + (\z_0)_{i}  (z_j - (\z_0)_j) \nonumber \\
  = & \ \trans{(\hat{\a}_{i})} (\h(\z) \otimes \u_1) (\trans{(\e_{I^1(z_j)})} \u_1)
  + (\z_0)_{i} \trans{(\hat{\a}_{j})} (\h(\z) \otimes \u_1).
\end{align} 
We see that the Kronecker  product $\h(\z) \otimes \u_2$, i.e., the componentwise product of $\h(\z)$ and $\u_2$,
 includes
 $\h(\z) \otimes \u_1$.
Thus,  there exists $U^2_{ij} \in \SymMat_+^{KN}$  such that 
$(z_i z_j  - (\z_0)_i (\z_0)_{j})^2 = \trans{(\h(\z)\otimes \u_2)} U^2_{ij}  (\h(\z)\otimes \u_2)$.

Consequently, it holds that
\begin{align*}
  & \ \trans{\left(\u_2 - (\u_2)_0\right)} \overline{E} \left(\u_2 - (\u_2)_0\right) \\
  = & \ \sum_{i=1}^s (z_i - (\z_0)_i)^2 
  + \sum_{i=1}^{s} \sum_{j=i}^{s} (z_i z_j - (\z_0)_i (\z_0)_j)^2 \\
 = & \ \sum_{i=1}^s \trans{(\h(\z)\otimes \u_2)} U^2_{i}  (\h(\z)\otimes \u_2)
+ \sum_{i=1}^{s} \sum_{j=i}^{s} \trans{(\h(\z)\otimes \u_2)} U^2_{ij}  (\h(\z)\otimes \u_2) \\
= & \ \trans{(\h(\z)\otimes \u_2)} U (\h(\z)\otimes \u_2),
\end{align*}
where 
\begin{equation} \label{eq:U_def}
U := \sum_{i=1}^s  U^2_{i} + \sum_{i=1}^{s} \sum_{j=i}^{s} U^2_{ij}.
\end{equation}
It should be mentioned that different ways of expressing 
\eqref{eq:factor} exist, for example,
\begin{equation*} 
  z_i z_j  - (\z_0)_i (\z_0)_{j} 
 =  \  (z_j - (\z_0)_j) z_i
  + (\z_0)_{j}  (z_i - (\z_0)_i), 
\end{equation*}
therefore, the matrix $U$ is not always unique.

In view of \cref{rem:existence_Q_for_exact}, 
we can find $Q \in \SymMat_+^{N}$, $\lambda_k(\z)  \in \Sigma_{4-2d_k}{[\z]} \ (k=1,\dots,K)$ and $U \in \SymMat_+^{KN}$ 
such that 
\begin{equation} \label{eq:equality_of_Q_and_R2}
  \trans{(\u_2)} Q \u_2 - \trans{(\u_2)}_0 Q (\u_2)_0 + \sum\limits_{k=1}^K h_k(\z)^2 \lambda_k(\z) = \trans{(\h(\z)\otimes \u_2)} U (\h(\z)\otimes \u_2).
\end{equation}	
Consequently,  we have shown that $\h(\z)$ is included in $\trans{\left(\u_2 - (\u_2)_0\right)} \overline{E} \left(\u_2 - (\u_2)_0\right)$
 in \eqref{eq:equality_of_Q_and_R}.

\subsection{Exact recovery 
of the problem with  hidden  connectivity} \label{sec:hidden}

In this subsection, we discuss 
the exact recovery of problems  
with hidden connectivity in \cref{sec:HiddenConnectivity}.

Using $\overline{\bold{h}}(\z) = C \h(\z)$ in \eqref{eq:sos_relaxationH},
the equation \eqref{eq:equality_of_Q_and_R2} turns  out  to be
\begin{equation} \label{eq:equality_of_Q_and_R2-C}
    \trans{(\u_2)} Q \u_2 - \trans{(\u_2)}_0 Q (\u_2)_0 + \sum_{\overline{k}=1}^{\overline{K}} 
    (\overline{h}_{\overline{k}}(\z))^2 
    \overline{\lambda}_{\overline{k}}(\z) = \trans{(\overline{\h}(\z)\otimes \u_2)} \overline{U} ((\overline{\h}(\z))\otimes \u_2),
  \end{equation}	
with $\overline{\lambda}_{\overline{k}}(\z) \in \Sigma_{4-2d_{\overline{k}}}(\z)$ and 
some $\overline{U} \in \SymMat_+^{\overline{K}N}$.

From the assumption that $\trans{C}C$ is invertible in \cref{sec:HiddenConnectivity},
$\overline{\h}(\z) = 0$ if and only if $\h(\z) = 0$.
Since $C \in \Real^{\overline{K} \times K} $ is a matrix, there exists 
$\overline{C} \in  \Real^{(\overline{K} N) \times (KN)}$
such that 
$(C \h(\z))\otimes \u_2 = \overline{C} (\h(\z) \otimes \u_2)$.
By letting $U = \trans{\overline{C}} \ \overline{U} \ \overline{C} \in \SymMat_+^{KN}$, 
\eqref{eq:equality_of_Q_and_R2-C} can be written as
\begin{equation*} 
    \trans{(\u_2)} Q \u_2 - \trans{(\u_2)}_0 Q (\u_2)_0 + \sum_{\overline{k}=1}^{\overline{K}} 
    (\overline{h}_{\overline{k}}(\z))^2 
    \overline{\lambda}_{\overline{k}}(\z) = \trans{(\h(\z)\otimes \u_2)} U (\h(\z)\otimes \u_2).
  \end{equation*}	
  
In addition, 
   $\sum_{\overline{k}=1}^{\overline{K}} 
    (\overline{h}_{\overline{k}}(\z))^2 
    \overline{\lambda}_{\overline{k}}(\z)$ 
can be expressed as 
$\trans{(\h(\z)\otimes \u_2)} V (\h(\z)\otimes \u_2)$
for some  $V \in \SymMat_+^{KN}$.
  Therefore, 
the discussion  in  \cref{sec:hidden}  can be reduced 
to that of \cref{sec:expression-with-h}, and 
we can obtain the exact recovery of the problem with 
$\overline{\h}(\z) = 0$ by solving \eqref{eq:equality_of_Q_and_R2-C}.

\section{Algorithms and numerical results} 
\label{sec:algorithm}

We propose a two-stage algorithm for solving  the SOS relaxation \eqref{eq:sos_relaxation}.
In the first stage, $Q$ is constructed  as described in \cref{lemma:decomposition_of_El} and \cref{thm:rank_recover} with
 the given data $\h(\z)=0$.
In the second stage, we solve \eqref{eq:sos_relaxation}.

We also present numerical results  in \cref{sec:Numerical_results} by implementing the proposed algorithm on \cref{ex:partial_matrix} and test problems with data generated based on
bipartite graphs. 

\subsection{Algorithms}

The  first stage of our proposed algorithm   constructs
 $Q$ in \cref{thm:rank_recover} by   finding $Q^r \ ( r=2,\ldots,N)$ and then
letting $Q = \sum_{r=2}^N Q^r$. Each $Q^r$ has nonzero elements only in ${[Q^r]}_{11}$,  ${[Q^r]}_{rr}$, 
${[Q^r]}_{1 r}$, and ${[Q^r]}_{r 1}$. The resulting $Q$ is an arrowhead matrix.
The second stage is to solve the SOS relaxation to determine  $\Gamma$ in \cref{thm:rank_recover_gen}.

Specifically, in the first stage, for any  $r \in \{2, \dots, N\}$, $Q^r$ is determined as follows: 
\begin{equation} \label{eq:stage1_algorithm} \tag{$\mathscr{S}_1(r)$}
    \begin{array}{rl}
        \mathrm{find} & Q^r \in \SymMat_+^{N},\; U^r \in \SymMat_+^{KN} \\
        \subto
        & \trans{\begin{bmatrix}1 \\ [\u_2]_r\end{bmatrix}} Q^r_{\{1, r\}}\begin{bmatrix}1 \\ [\u_2]_r\end{bmatrix} = \trans{\left(\h(\z) \otimes \u_2\right)} U^r \left(\h(\z) \otimes \u_2\right), \\[2ex]
        & {[Q^r]}_{rr} = 1,  \quad
          {[Q^r]}_{ij} = 0, \quad (i, j) \in  (N \times N), \; i \not\in \{1, r\} \lor j \not\in \{1, r\},
    \end{array}
\end{equation}
where $Q^r_{\{1, r\}}$ denotes a  $2 \times 2$ submatrix that includes the first and $r$th columns and rows of $Q^r$.
  We note that  $\trans{\left(\h(\z) \otimes \u^2\right)} R^r \left(\h(\z) \otimes \u^2\right)$ can be expressed as $\trans{\h(\z)} \widehat{U}^r \h(\z) $
    if  $\widehat{U}^r$ is a $K \times K$  symmetric polynomial matrix.  
For the feasibility of \eqref{eq:stage1_algorithm}, we present the following proposition:
\begin{prop} \label{prop:feasibility_of_stage1}
    For  $r \in \{2, \dots, N\}$, the problem~\eqref{eq:stage1_algorithm} has a feasible solution.
\end{prop}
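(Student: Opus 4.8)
The plan is to exhibit an explicit feasible pair $(Q^r, U^r)$, taking $Q^r$ from \cref{lemma:decomposition_of_El} and reading off $U^r$ from the representation developed in \cref{sec:expression-with-h}. First I would fix $r \in \{2,\dots,N\}$ and recall that, by the definition of $\u_2$ in \eqref{eq:u2}, the $r$th entry $[\u_2]_r = z_{i(r)}z_{j(r)}$ is a monomial in $\z$ of degree either $1$ (when $r \in \{2,\dots,s+1\}$) or $2$ (when $r \in \{s+2,\dots,N\}$), and in neither case is it the constant entry $[\u_2]_1 = 1$, since $r \ge 2$. I would then take $Q^r \in \SymMat_+^{N}$ to be the rank-one matrix of \eqref{eq:definition_of_Qell} from \cref{lemma:decomposition_of_El}. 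By construction its only nonzero entries lie in rows and columns $1$ and $r$, so $[Q^r]_{ij} = 0$ whenever $i \notin \{1,r\}$ or $j \notin \{1,r\}$, and $[Q^r]_{rr} = 1$; hence the support and normalization constraints in the last line of \eqref{eq:stage1_algorithm} hold. Moreover, because the support of $Q^r$ is contained in the $\{1,r\}$ block and $[\u_2]_1 = 1$, combining \eqref{eq:u02Qru02} with \eqref{eq:decomposition_of_El} (in which $E^r$ picks out the $r$th diagonal entry) yields
\[
\trans{\begin{bmatrix}1\\ [\u_2]_r\end{bmatrix}} Q^r_{\{1,r\}} \begin{bmatrix}1\\ [\u_2]_r\end{bmatrix}
= \trans{(\u_2)} Q^r \u_2 - \trans{(\u_2)_0} Q^r (\u_2)_0
= \bigl([\u_2]_r - [(\u_2)_0]_r\bigr)^2 .
\]

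It then remains to represent the right-hand side of the equality constraint in \eqref{eq:stage1_algorithm}. If $[\u_2]_r = z_i$ has degree $1$, then $\bigl([\u_2]_r - [(\u_2)_0]_r\bigr)^2 = (z_i - (\z_0)_i)^2$, which by the chain-based argument culminating in \eqref{eq:U1} equals $\trans{(\h(\z)\otimes \u_2)} U^2_i (\h(\z)\otimes \u_2)$ for some $U^2_i \in \SymMat_+^{KN}$; if $[\u_2]_r = z_i z_j$ has degree $2$, then $\bigl([\u_2]_r - [(\u_2)_0]_r\bigr)^2 = (z_i z_j - (\z_0)_i (\z_0)_j)^2$, which by \eqref{eq:factor} and the discussion following it equals $\trans{(\h(\z)\otimes \u_2)} U^2_{ij} (\h(\z)\otimes \u_2)$ for some $U^2_{ij} \in \SymMat_+^{KN}$. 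Setting $U^r := U^2_{i(r)}$ in the first case and $U^r := U^2_{i(r)j(r)}$ in the second, the pair $(Q^r, U^r)$ satisfies every constraint of \eqref{eq:stage1_algorithm}, establishing the proposition.

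I do not anticipate a genuine obstacle, since the statement is essentially a repackaging of \cref{lemma:decomposition_of_El} together with the $\h(\z)$-representation built in \cref{sec:expression-with-h}. The points that deserve care are: (i) the case split according to whether $[\u_2]_r$ has degree $1$ or $2$, including the boundary case $i = j$ (that is, $[\u_2]_r = z_i^2$), which is handled exactly as in \eqref{eq:factor} with $\hat{\a}_i = \hat{\a}_j$; (ii) observing that $U^r \in \SymMat_+^{KN}$ is automatic, because the matrices $U^2_i$ and $U^2_{ij}$ of \cref{sec:expression-with-h} are constructed as outer products (or finite sums of outer products) and hence lie in $\SymMat_+^{KN}$ by construction; and (iii) recalling that \cref{sec:expression-with-h} presupposes a chain structure as in \eqref{eq:Chain} reaching every coordinate of $\z$ from $z_0 = x_1$, which is available here by \cref{asm:nozeros_in_z} together with the connectivity of the given data assumed throughout.
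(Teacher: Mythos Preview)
Your proposal is correct and follows essentially the same approach as the paper: take $Q^r$ from \cref{lemma:decomposition_of_El} and invoke the $\h(\z)\otimes\u_2$ representation developed in \cref{sec:expression-with-h} to produce $U^r$. The paper's proof is terser---it simply cites \cref{sec:given-data} without the degree-$1$/degree-$2$ case split---but the substance is identical.
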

\begin{proof}
It is easy to see that $\trans{\left(\h(\z) \otimes \u_2\right)} U^r \left(\h(\z) \otimes \u_2\right)$
   in \eqref{eq:stage1_algorithm} is
    a general form of  $\sum_{k = 1}^K h_k(\z)^2 p_k(\z)$,
    where  $p_k(\z) \in \Sigma_{4-2d_k}[\z]$.
   As discussed in \cref{sec:given-data}, 
 there exists $U^r$ such that 
  \[  \trans{\begin{bmatrix}1 \\ \ [\u_2]_r\end{bmatrix}} Q^r_{\{1, r\}}\begin{bmatrix}1 \\ [\u_2]_r\end{bmatrix} =
   \trans{\left(\h(\z) \otimes \u_2\right)} U^r \left(\h(\z) \otimes \u_2\right). 
   \] 
\end{proof}
\noindent
By letting $Q = \sum_{r=2}^N Q^r$, 
we obtain the following problem from \eqref{eq:stage1_algorithm}:
\begin{equation} \label{eq:stage1_algorithm_alter} \tag{$\mathscr{S}_{1,\mathrm{sum}}$}
    \begin{array}{rl}
        \mathrm{find} & Q \in \SymMat_+^{N},\; U \in \SymMat_+^{KN} \\
        \subto
        & \trans{\left(\u_2\right)} Q \u_2 = \trans{\left(\h(\z) \otimes \u_2\right)} U \left(\h(\z) \otimes \u_2\right), \\
        & Q_{ii} = 1, \quad i \in \{2, \dots, N\},  \\
        & Q_{ij} = 0, \quad (i, j) \in  \{2, \dots, N\} \times \{2, \dots, N\},  \; i \neq j,
    \end{array}
\end{equation}
which has a feasible solution by \cref{prop:feasibility_of_stage1}.
For $\sum_{r=2}^N Q^r$,   
\eqref{eq:stage1_algorithm} needs to be solved $(N-1)$ times for each $r$ in $\{2,\dots, N\}$,
while \eqref{eq:stage1_algorithm_alter} needs to be solved once.
Hence, solving \eqref{eq:stage1_algorithm_alter} is  more efficient than \eqref{eq:stage1_algorithm} 
and is employed in our algorithm.

Let $f_\mathrm{sum}(\z) = \trans{\left(\u_2\right)} Q \u_2$.
In the second stage, we solve
the following SDP for a solution that can recover  the matrix.
\begin{equation} \label{eq:stage2_algorithm} \tag{$\mathscr{S}_2(f_\mathrm{sum})$}
    \begin{array}{rl}
        \max\limits_{\rho, \boldsymbol{\lambda}, \Delta_1, \ldots, \Delta_K} & \rho \\
        \subto
        & f_\mathrm{sum}(\z) - \rho + \sum\limits_{k = 1}^K h_k(\z)^2 \lambda_k(\z)
            \in \Sigma_4[\z] \\
		& \lambda_k(\z) = \trans{\u_1} \Delta_k \u_1 \quad (k = 1, \ldots, K) \\
		& \mu I - \Delta_k \succeq O \quad (k = 1, \ldots, K) \\
        & \rho \in \Real, \; \lambda_k(\z) \in \Sigma_{4 - 2d_k}[\z], \; \Delta_k \in \SymMat_+^{m+n} \quad (k = 1, \ldots, K).
    \end{array}
\end{equation}
where $\mu \in \Real$ is a sufficiently large constant.
Compared with \eqref{eq:sos_relaxation},
the problem~\eqref{eq:stage2_algorithm} includes the constraints $\lambda_k(\z) = \trans{\u_2} \Delta_k \u_2$
and $\mu I - \Delta_k \succeq O$ for all $k = 2, \ldots, N$
for the boundedness of the dual variables $\boldsymbol{\lambda}$.
By Slater's condition,
 strong duality holds between \eqref{eq:stage2_algorithm} and its dual.
In the numerical experiments, we use $\mu = 10^6$.

The proposed algorithm is described in \cref{alg:solve_after_making_fsum}. 
\begin{algorithm}[ht]
    \begin{algorithmic}[1]
    \caption{ for solving \eqref{eq:matrix_completion}}
    \label{alg:solve_after_making_fsum}
        \REQUIRE  $\h(\z)$ 
        \ENSURE  A solution $\z^*$ to \eqref{eq:matrix_completion} 
            \STATE
              Solve the problem~\eqref{eq:stage1_algorithm_alter}
                and obtain a solution $\left( Q^*, U^*\right)$.
            \STATE
               $f_\mathrm{sum} \leftarrow  \trans{(\h(\z) \otimes \u_2)} U^* (\h(\z) \otimes \u_2)$.
        \STATE
            Solve the problem~\eqref{eq:stage2_algorithm} using $f_\mathrm{sum}$
            and obtain a solution $(\rho^*, \boldsymbol{\lambda}^*)$.
        \STATE
            Find  the Gram matrix $\Gamma \in \bS^{N}$ such that 
			\begin{equation*}
				\trans{\left(\u_2\right)}\Gamma\u_2
				= f_\mathrm{sum}(\z) - \rho^* + \sum\limits_{k = 1}^K h_k(\z)^2 \lambda^*_k(\z).
			\end{equation*}
		\STATE
			Find a vector $\u_2^* \in \Real^{N}$ in the null space of $\Gamma$.
        \STATE
			$\z^* \leftarrow \frac{1}{(\u_2^*)_1} \trans{\left[(\u_2^*)_2, (\u_2^*)_3,  \dots, (\u_2^*)_{s+1}\right]}$ and return $\z^*$.
    \end{algorithmic}
\end{algorithm}

\noindent
If the rank of $\Gamma$ at  Step~4 is $(N-1)$, the dimension of the null space of $\Gamma$ must be $1$
and
$u_2^*$ is uniquely determined up to a scalar.
At Step~6, the multiplier $\frac{1}{(\u_2^*)_1}$ is used to have $(\u_2^*)_1 = 1$ by the definition of $\u_2$; more precisely, 
the first $s+1$ elements of $\u_2^*/(\u_2^*)_1$ correspond to $[1; \z^*]$.

\subsection{Numerical results} \label{sec:Numerical_results}
We present numerical results by   \cref{alg:solve_after_making_fsum} on \cref{ex:partial_matrix}
in \cref{sec:Example22} and randomly generated problems
 with bipartite graphs in \cref{sec:Random}.
The bipartite graph  $\GC$ is employed as $x_1 = 1$, $x_i \ (i=2,\ldots,n)$, and $y_j \ (j=1,\ldots,m)$ form  two groups and
the partially given data for $x_iy_j$ can be regarded as an edge between the two groups.

The purpose of the experiments is to  verify that \cref{alg:solve_after_making_fsum} can recover the exact solution of \eqref{eq:matrix_completion}
under the assumption of \cref{thm:rank_recover} and
the connectivity of the bipartite graph $\GC$. 
In the discussion of numerical results by \cref{alg:solve_after_making_fsum},
 the error is computed by  
\begin{align} \label{eq:def_error}
	\text{Er$_{\z^*}$} := \frac{\left\| \z^* - \z_0 \right\|_2}{\left\|\z_0\right\|_2}.
\end{align}
All computations were performed on 
Intel Core i9-12900K (3.2GHz) with Julia~1.9.2 and Mosek~10.0.2.

\subsubsection{Illustrative example} 
\label{sec:Example22}

For \cref{ex:partial_matrix},
 the first stage of the algorithm solves~\eqref{eq:stage1_algorithm_alter}
with  $\h(\z) = [y_1 - 7; \; y_2 - 3; \; x_2 y_1 + 35; \; x_2 y_3 - 10; \; x_3 y_2 - 9]$.
The resulting  polynomial $f_\mathrm{sum}$ is used in  the second-stage problem~\eqref{eq:stage2_algorithm}:
\begin{equation*}
    \begin{array}{rl}
        \max\limits_{\rho, \boldsymbol{\lambda},\Delta_1,\Delta_2} & \rho  \\
        \quad \subto
        &  f_\mathrm{sum} - \rho + (y_1 - 7)^2 \lambda_1 + (y_2 - 3)^2 \lambda_2  \\
        &  \quad + (x_2 y_1 + 35)^2 \lambda_3 + (x_2 y_3 - 10)^2 \lambda_4 + (x_3 y_2 - 9)^2 \lambda_5 \; \in \Sigma_4[\z] \\[1ex]
        & \lambda_k(\z) = \trans{\u_1} \Delta_k \u_1 \quad (k = 1, 2) \\
        & \mu I - \Delta_k \succeq O \quad (k = 1,2) \\
        & \rho \in \Real, \; \Delta_k \in \SymMat_+^{n+m} (k = 1, 2), \; \lambda_3, \lambda_4, \lambda_5 \in [0, \mu].
    \end{array}
\end{equation*}
The matrices $Q$ and $U$ in \eqref{eq:U_def} can be obtained after \eqref{eq:stage1_algorithm_alter} is solved.
Their size  is so large  that  the matrices cannot be displayed here, instead, shown in GitHub (\url{https://github.com/godazm/exact-matrix-completion}).

\cref{table:check_illustrative_example} displays
the total time for the two stages, the optimal value of \eqref{eq:stage2_algorithm},
the recovered solution $\z^*$ of \cref{alg:solve_after_making_fsum}, and its error $\text{Er$_{\z^*}$}$.
We can see 
 that the total time for the first stage is longer than that of the second stage,
although \eqref{eq:stage1_algorithm_alter} was implemented instead of \eqref{eq:stage1_algorithm}.
We observe that \eqref{eq:stage2_algorithm} can recover an exact solution of the rank-1 matrix for~\cref{ex:partial_matrix}.

\begin{table}[htb]
	\caption{Average total time in seconds and relative errors.}
	\label{table:check_illustrative_example}
	\centering
	\begin{tabular}{ccccc}
		\multicolumn{2}{c}{Total time} & Optimal value & Recovered solution $\z^*$ & $\text{Er$_{\z^*}$}$ \\
		\eqref{eq:stage1_algorithm_alter} & \eqref{eq:stage2_algorithm} & \eqref{eq:stage2_algorithm} & &   \\ \hline
		$7.62$ & $1.03$ & $1.59 \times 10^{-6}$ & $[1.00; -5.00; 3.00; 7.00; 3.00; -2.00]$ & $9.91 \times 10^{-7}$ \\
	\end{tabular}
\end{table}


\subsubsection{Tests on random data} \label{sec:Random}

We performed numerical  experiments on two types of  random data: data in a chain format and
data with hidden connectivity.  
Each experiment involved 100 randomly generated instances.
The instances were generated by  \cref{alg:random_bipartite_graph} for connected bipartite graphs.
With $\mathrm{rand}(\cdot)$ in \cref{alg:random_bipartite_graph},
each element of $(\cdot)$ is generated 
with the same probability. 
Ten samples of  $\z_0$ were generated simultaneously
where each element is chosen by $(\z_0)_i \in \mathrm{rand}([-5, 5])$ for $i \in \{1, \ldots, s\}$.
\begin{algorithm}[ht]
    \begin{algorithmic}[1]
    \caption{Generating random bipartite graphs}
    \label{alg:random_bipartite_graph}
        \REQUIRE The number of vertices $v$
        \ENSURE A bipartite graph $(\{1,\dots, n\}, \{1, \dots, m\}, \Gamma)$
		\STATE $n \leftarrow \mathrm{rand}(\{1, \dots, v-1\})$,  $m \leftarrow v - n$
		\STATE $L_\mathrm{remain} \leftarrow \{1, \dots, n\}$, $R_\mathrm{remain} \leftarrow \{1,\dots, m\}$
		\STATE $l_\mathrm{first} \leftarrow \mathrm{rand}\left(L_\mathrm{remain}\right)$
					and then $L_\mathrm{remain} \leftarrow L_\mathrm{remain} \setminus \{l_\mathrm{first}\}$
		\STATE $r_\mathrm{first} \leftarrow  \mathrm{rand}\left(R_\mathrm{remain}\right)$
					and then $R_\mathrm{remain} \leftarrow R_\mathrm{remain} \setminus \{r_\mathrm{first}\}$
		\STATE $\Gamma \leftarrow \left\{(l_\mathrm{first}, r_\mathrm{first})\right\}$
		\WHILE{$|L_\mathrm{remain}| + |R_\mathrm{remain}| \geq 1$}
			\STATE Select {\em Left}  or {\em Right} randomly
			\IF{{\em Left} and $L_\mathrm{remain} \neq \emptyset$}
			 	\STATE $r \leftarrow  \mathrm{rand}\left(\{1, \dots, m\} \setminus R_\mathrm{remain}\right)$
			 	\STATE $l \leftarrow \mathrm{rand}\left(L_\mathrm{remain}\right)$
			 		and then $L_\mathrm{remain} \leftarrow L_\mathrm{remain} \setminus \{l\}$
			 	\STATE $\Omega \leftarrow \Omega \cup \{(l, r)\}$
			\ELSIF{{\em Right}  and  $R_\mathrm{remain} \neq \emptyset$}
			 	\STATE $l \leftarrow \mathrm{rand}\left(\{1, \dots, n\} \setminus L_\mathrm{remain}\right)$
			 	\STATE $r \leftarrow \mathrm{rand}\left(R_\mathrm{remain}\right)$
			 		and then $R_\mathrm{remain} \leftarrow R_\mathrm{remain} \setminus \{r\}$
			 	\STATE $\Omega \leftarrow \Omega \cup \{(l, r)\}$
			\ENDIF
        \ENDWHILE
    \end{algorithmic}
\end{algorithm}

\begin{table}[ht]
	\caption{Bipartite graphs.}
	\label{table:graph_property}
	\centering
	\begin{tabular}{c|cccccccccc}
		& $\GC_1$ & $\GC_2$ & $\GC_3$ & $\GC_4$ & $\GC_5$ & $\GC_6$ & $\GC_7$ & $\GC_8$ & $\GC_9$ & $\GC_{10}$ \\ \hline
		$n$ & $9$ & $7$ & $6$ & $2$ & $6$ & $2$ & $1$ & $8$ & $1$ & $6$ \\
		$m$ & $1$ & $3$ & $4$ & $8$ & $4$ & $8$ & $9$ & $2$ & $9$ & $4$ \\
		max. degree & $9$ & $5$ & $3$ & $5$ & $4$ & $5$ & $9$ & $6$ & $9$ & $4$
	\end{tabular}
\end{table}

We generated $10$ bipartite graphs $\GC_1,\ldots,\GC_{10}$ associated with
$\X$ in \eqref{eq:matrix_completion} with $10$ vertices fixed.
For the connectivity,
each graph $\GC_p$ always included $9$ edges for $p = 1, \ldots, 10$. 
The graph property of $\GC_p$ is summarized on \cref{table:graph_property}.
The lengths of $\x$ and $\y$ are shown in the rows of $n$ and $m$, respectively,
and the maximum degree among all vertices in $\GC_p$ is also shown in the last row.
For each $\z_0$ and each bipartite graph $\GC_p$,
 we generated $100$ instances of \eqref{eq:matrix_completion} as follows:
 (i)   The initial $n-1$ elements of $\z_0 \in \Real^{n+m-1}$ are regarded as the true values $(\x_0)_2, \dots, (\x_0)_{n}$,
		while the last $m$ elements are considered as $\y_0$.  
 (ii)  For every edge $(i, j) \in \mathbb{N}^2$ of $\GC_p$, an equality constraint $h_k$ is constructed to determine $(X_0)_{ij}$. 
For example, when $n = 7$ and $m = 3$,
the edge $(3, 2)$ corresponds to a constraint of form $h_k(\z) = x_3 y_2 - (\x_0)_3 (\y_0)_2 = 0$,
where $\x_0 = \trans{[1, (\z_0)_1, \ldots, (\z_0)_6]}$ and $\y_0 = \trans{[(\z_0)_7, (\z_0)_8, (\z_0)_9]}$.

\begin{table}[ht]
	\caption{Average total time in seconds and relative errors.}
	\label{table:comparinggraph}
	\centering
	\begin{tabular}{c|ccc}
		& \multicolumn{2}{|c}{Total time} & Mean errors \\
		& \eqref{eq:stage1_algorithm_alter} & \eqref{eq:stage2_algorithm} & \\ \hline
		$\GC_1$    & $68.9$ & $0.325$ & $6.21 \times 10^{-8}$ \\
		$\GC_2$    & $119 $ & $0.338$ & $2.81 \times 10^{-7}$ \\
		$\GC_3$    & $138 $ & $0.338$ & $1.93 \times 10^{-6}$ \\
		$\GC_4$    & $87.7$ & $0.349$ & $3.58 \times 10^{-7}$ \\
		$\GC_5$    & $148 $ & $0.340$ & $7.51 \times 10^{-6}$ \\
		$\GC_6$    & $88.2$ & $0.354$ & $1.40 \times 10^{-6}$ \\
		$\GC_7$    & $42.7$ & $0.354$ & $4.65 \times 10^{-8}$ \\
		$\GC_8$    & $105 $ & $0.343$ & $7.02 \times 10^{-7}$ \\
		$\GC_9$    & $45.1$ & $0.365$ & $3.46 \times 10^{-8}$ \\
		$\GC_{10}$ & $138 $ & $0.348$ & $3.28 \times 10^{-7}$ \\
	\end{tabular}
\end{table}

\cref{table:comparinggraph} shows the results on random data in a chain structure. 
Each row displays the mean value of times in seconds for the first and second stages,
and the mean of errors~\eqref{eq:def_error} from $10$ samples.
We observe that the first stage spent much computational time as
\eqref{eq:stage1_algorithm_alter} involves a large matrix of size $495 \times 495$.
Here $495$ is computed from $9 \times 55$ with the size of $\h(\z)$ and  $\u_2$, which is $K = 9$ and  $N = \begin{pmatrix} 11 \\ 2 \end{pmatrix} = 55$, respectively.
We  also see that the total time is shorter for the instances with larger differences between  $n$ and $m$  than the instances with small differences. 
The errors shown in the last column of \cref{table:comparinggraph} are small, thus
we see that the exact solution of \eqref{eq:matrix_completion_first} can be
recovered for all instances.

To test the second type of data for \eqref{eq:matrix_completionH}, test instances were generated with a matrix $C$
as discussed in \cref{sec:HiddenConnectivity}.
The $i$th row of $C$ scales each component of $\h(\z)$,
i.e., if the first row of $C$ is $[1 \; 0 \; {-3} \; 0 \; 0 \; \dots \; 0]$,
the first constraint of \eqref{eq:matrix_completionH} must be $\left(h_1(\z) - 3 h_3(\z)\right)^2 \leq 0$.
In our experiments,
we generated $10$ invertible matrices $C_1, \ldots, C_{10} \in \Real^{9 \times 9}$ for $C$.
Each element of them was randomly chosen from $[-5, 5]$,
 and then  each row of $C_k$ was normalized using the Euclidean norm for  $k=1, \ldots, 10$.
For each matrix $C_k$,
we tested the algorithm for all the combination of graphs $\GC_1,\ldots,\GC_{10}$
and $10$ samples of  $\z_0$, and thus $100$ instances were used.

 \cref{fig:c_error,fig:c_time}
show the performance of the algorithm for each matrix $C_k$.
Note that \eqref{eq:matrix_completion} can be regarded as a special case of \eqref{eq:matrix_completionH}, in other words, \eqref{eq:matrix_completionH} with $C = I$.
From 
the error $\text{Er$_{\z^*}$}$ from $100$ instances in \cref{fig:c_error},
the proposed algorithm can extract the solution with accuracy even for the problems with 
the hidden connectivity. We also notice that the mean and maximum
 errors for the instances with $C= C_1, C_2, \ldots, C_{10}$ 
 are larger than that for the instances with $ C=I$.
The computing time for the first and second stages is shown in \cref{fig:c_time}.
The time unit in this figure is second.
We see 
that 
 the total time for the instances with $C= C_1, C_2, \ldots, C_{10}$ 
 is much longer than that for the instances with $ C=I$,  
 since $\overline{\h}(\z) = C \h(\z)$ involves more nonzero coefficients than 
 $\h(\z)$.

\begin{figure}[ht]
	\centering
	\begin{minipage}{0.47\linewidth}
		\centering
		\includegraphics[keepaspectratio, width=0.99\linewidth]{./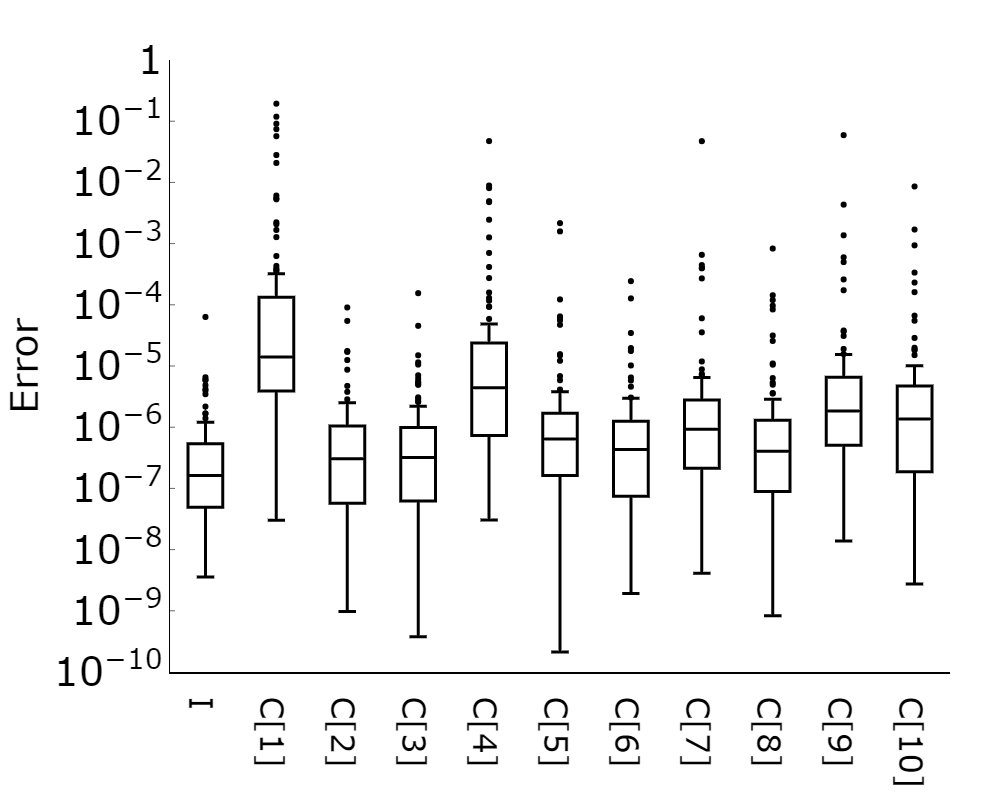}
	\end{minipage}
	\caption{Relative errors for the instances with hidden connectivity,  $I, C_1, \ldots, C_{10}$. Errors are shown in the log-scale.}
	\label{fig:c_error}
	\centering
	\begin{minipage}{0.47\linewidth}
		\centering
		\includegraphics[keepaspectratio, width=0.99\linewidth]{./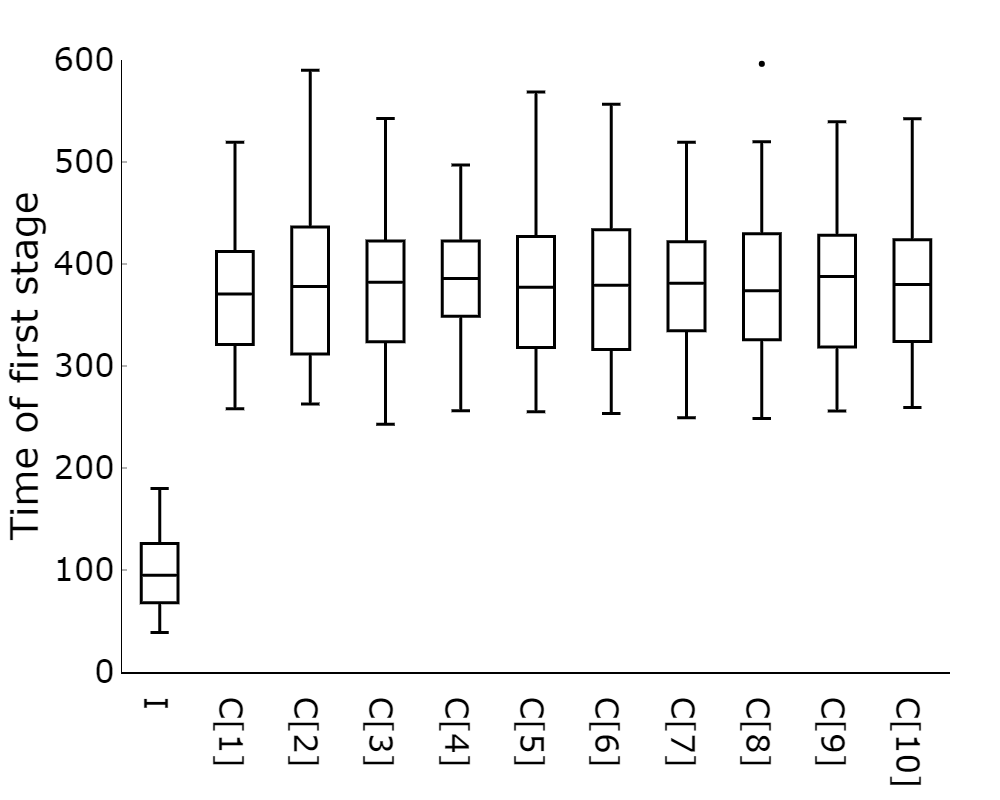}
		\subcaption{First stage}
	\end{minipage}
	\hspace{1em}
	\begin{minipage}{0.47\linewidth}
		\centering
		\includegraphics[keepaspectratio, width=0.99\linewidth]{./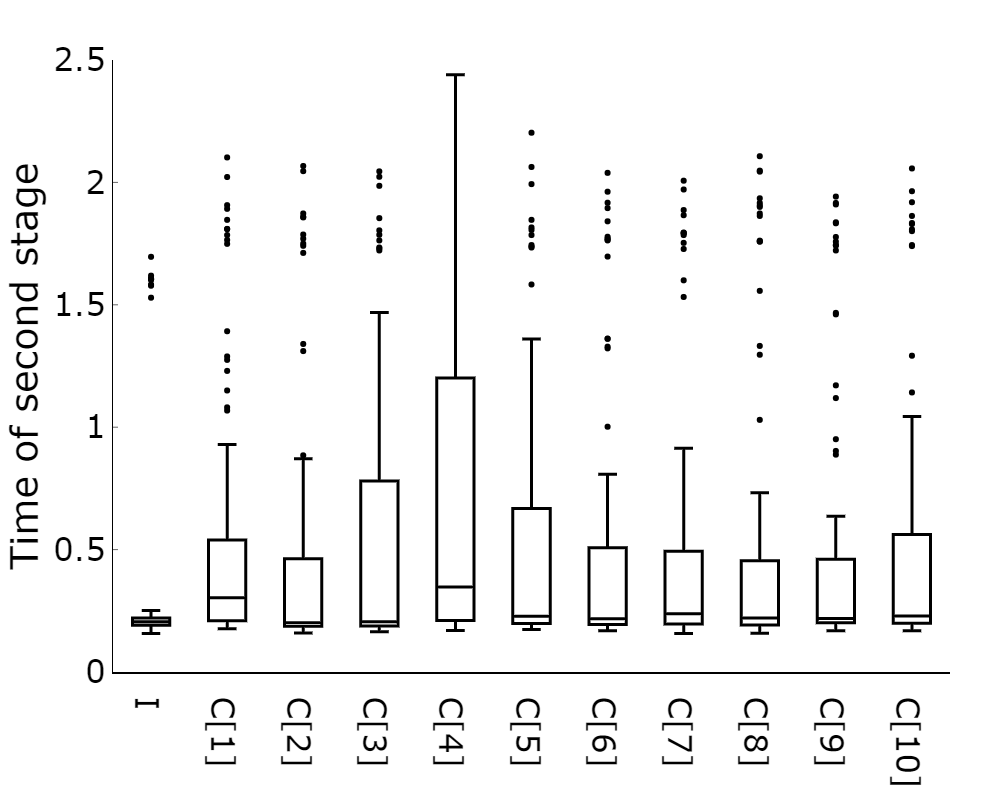}
		\subcaption{Second stage}
	\end{minipage}
	\caption{Total time in seconds for the instances with hidden connectivity, $I, C_1, \ldots, C_{10}$.} 
	\label{fig:c_time}
\end{figure}

\section{Concluding remarks} \label{sec:conclusion}

We have studied a minimization problem arising from the matrix completion under the assumption that
the known values of  the matrix elements form a chain format or
possess the hidden connectivity. 
For the minimization problem, 
we have proved that the exact solution to the problem can be obtained by the SOS relaxation or the SDP relaxation
by formulating the problem with the constraints as polynomials of degree up to 4 and incorporating an objective function with the arrowhead sparsity pattern. 

Exploiting the sparsity of a given problem has been an essential subject for improving the computational efficiency of  SDP relaxations \cite{fukuda2001exploiting,kim2011exploiting,KIM2009,nakata2003exploiting}.
It has been commonly employed to reduce the size of the variable matrix so that the resulting problem 
can be solved fast.
The bipartite structure, on the other hand, has been used to show the exactness of the SDP relaxation for some class of quadratically constrained
quadratic programs \cite{Azuma2021, Azuma2022}.

The sparsity discussed in this paper falls into two categories:
one is for the objective function and the other for the constraints.  
In the case of  the proposed problem  \eqref{eq:matrix_completion}, the sparsity in the objective function is not obtained from the given problem.
Rather, it is enforced onto the objective  function through
the matrix $Q$ in \eqref{eq:sos_relaxation} with the arrowhead sparsity pattern and  the information from the given
data.
The constraint sparsity, characterized by the chain format, additionally preserves the rank 
of $\Gamma$ in \cref{thm:rank_recover_gen} 
as $N-1$.
In this context, both forms of sparsity have been utilized to show the exactness of the solution obtained from the SOS relaxation.
It will be interesting to explore the applicability of our approach to other problems where the objective function can be chosen.

\vspace{0.5cm}

\noindent



\end{document}